\documentclass{amsart}
\usepackage{amsmath}
\usepackage{amssymb}
\usepackage{mathtools}
\usepackage{amsthm}
\usepackage{hyperref}
\newtheorem{problem}{Problem}

\newtheorem{theorem}{Theorem}

\newtheorem{corollary}{Corollary}

\newtheorem{lemma}{Lemma}

\title{Arithmetic Progressions in Midpoint Colourings}
\author{Tomasz Kościuszko}
\begin{document}
\begin{abstract}
We introduce an asymmetric variant of a classic problem by Roth about colourings of integers and midpoints. The new problem cannot be solved with the standard approach by Erdös-Sárközy-Sós which uses symmetry. We describe a Fourier Analytic approach which gives asymptotically tight bounds. In the $\mathbb{F}_3^n$ setting, the density increment we show is efficient and together with the Freiman-Ruzsa Theorem provides an affine subspace of near optimal codimension within the distinguished set. This approach adopted to the setting of the integers from $1$ to $N$ via Bohr sets and Bogolyubov-Ruzsa Lemma gives a progression of length being a power of $N$ which only depends on the number of colours. We then use Chang's Lemma and Balog-Szemerédi-Gowers Theorem to further improve the dependence on the number of colours.
\end{abstract}
\maketitle
\section{The motivation for uncoloured midpoints}
Roth stated the following problem, see~\cite{ess}.
\begin{problem}
    Let $k$ be a fixed positive number and let $1,2,\cdots,N$ be $k$-coloured. Let $R\subseteq [N]$  and assume that whenever for distinct $x$ and $y$ there is $r\in R$, such that $x+y=2r$, then $x$ and $y$ have different colours. How large can $R$ be in terms of $N$?
\end{problem}
Erdős-Sárközy-Sós~\cite{ess} answered it by proving $|R|=o_k(N)$, specifically they showed that $|R|\leq N^{1-2^{-k-1}}.$ They proved it by a purely combinatorial elementary argument (they show that otherwise $R$ contains a translate of a $k$-dimensional hilbert cube, which leads to a contradiction).
The most desired result would be to improve this bound by using Fourier Analysis. It is possible do that for $\mathbb{F}_3^n$ instead of the integers which we describe in Section 2. Unfortunately, this is not the best possible approach as we can use the cap set theorem of Ellenberg and Gijswijt~\cite{ellenberg-gijswijt} to get even a stronger bound. We mention Fourier Analysis here as it seems that a more efficient execution of the density increment than the one we explore could actually improve the bounds.

Another similar problem is a variation of this question asked by Green in his list of \href{https://people.maths.ox.ac.uk/greenbj/papers/open-problems.pdf}{100 open problems, problem 25}~\cite{green}.
\begin{problem}
    Let $N$ be a positive integer and let $A_1\cup A_2\cup\cdots\cup A_k=[N]$ be a partitioning. Assume there exists a set $R\subseteq [N]$ with $|R|\geq N/10$ such that for every $i$ we have
    $$(A_i\hat{+}A_i)\cap (2\cdot R)=\emptyset$$
    What lower bound on $k$ can we prove in terms of $N$?
\end{problem}
The result by Erdős-Sárközy-Sós immediately implies $k\gg\log\log N$. There is however a much stronger lower bound coming from the Kelley--Meka bound for Roth's Theorem on arithmetic progressions (it is folklore knowledge and it is the same reasoning as the application of the cap set theorem we mentioned above).
\begin{theorem}(Kelley-Meka~\cite{kelley})
    Let $A\subseteq 1,2,...N$ be a set free of 3-term arithmetic progressions. Then $|A|\leq \exp(-\log^{c} N)N$.
\end{theorem}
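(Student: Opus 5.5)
The statement is the Kelley--Meka theorem, which the paper quotes rather than proves. If I had to reconstruct it, I would follow their density-increment strategy. The increment comes from $L^p$ bounds on convolutions rather than from a single large Fourier coefficient, since the Roth--Bourgain approach can only give $\log^{-c}N$ savings.

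\textbf{Setup.} I would first prove the model case in $\mathbb{F}_q^n$ and then transfer it to $[N]$ using Bohr sets. Embed $A\subseteq[N]$ in $\mathbb{Z}/M\mathbb{Z}$ with $M\approx 2N$ prime, so that no wraparound progressions are created. Write $\alpha=|A|/N$. If $A$ has no nontrivial $3$-APs, then $\langle 1_A * 1_A, 1_{2\cdot A}\rangle$ is tiny, namely about $\alpha N^{-1}$ times the normalisation, compared with the expected $\alpha^3$. With the balanced function $f=1_A-\alpha$, this says $\langle 1_A*1_A - \alpha^2, 1_{2\cdot A}\rangle \lesssim -\alpha^3/2$.

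\textbf{Step 1: from one correlation to a high $L^p$ norm.} By Hölder, the correlation above gives $\|1_A*1_A-\alpha^2\|_{p}\gtrsim \alpha^2$ for some $p\approx\log(1/\alpha)$. The key trick is to move to an even $p$ and symmetrise. Using spectral positivity ($\widehat{f\circ f}\ge 0$), one shows $\|f\circ f\|_p$ is large, with a one-sided bound giving $\|1_A\circ 1_A\|_{p'}\ge(1+\epsilon)\alpha^2$ for $p'=O(p)$.

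\textbf{Step 2: sifting (dependent random choice).} From the $L^{p'}$ largeness of $1_A\circ 1_A$, take $p'$ random shifts $x_1,\dots,x_{p'}$ and set $B=\bigcap_i (A-x_i)$. Comparing $p'$-th moments shows that $B_1,B_2$ can be found with density at least $\alpha^{O(p')}$ such that $\langle 1_{B_1}*1_{B_2}, 1_A\rangle\ge(1-\epsilon)\cdot\text{(density)}$ concentrates on a set where $1_A\circ 1_A\ge(1+\epsilon/2)\alpha^2$. This yields $|A\cap(\text{shift})|$ density increment on average against $\mu_{B_1}*\mu_{B_2}$.

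\textbf{Step 3: almost periodicity, then iteration.} Apply Croot--Sisask almost periodicity together with Chang-type bounds to $\mu_{B_1}*\mu_{B_2}*1_A$. This replaces the convolution by a Bohr set $T$ of rank $O(\log^{O(1)}(1/\alpha))$ and radius $\alpha^{O(1)}$-ish, on a translate of which $A$ has density $\alpha(1+\epsilon/4)$. Then iterate inside regular Bohr sets, using the relative versions of Steps 1--3. Each step costs rank $\log^{O(1)}(1/\alpha)$ and the density increases by a constant factor. After $O(\log(1/\alpha))$ steps we get a contradiction unless $N\le\exp(\log^{O(1)}(1/\alpha))$, which rearranges to $\alpha\le\exp(-\log^{c}N)$.

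\textbf{Main obstacle.} The hardest part is Step 2 combined with the Bohr-set bookkeeping. The sifting must give sets $B_1,B_2$ whose density loss $\alpha^{O(p)}$ is only quasi-polynomial. The almost-periodicity step must then keep the rank polylogarithmic, and regularity of Bohr sets has to be maintained through the sifting. I would first do everything in $\mathbb{F}_3^n$, where Bohr sets are subspaces, and only then redo the regularity estimates.
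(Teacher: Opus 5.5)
The paper gives no proof of this statement: it imports Kelley--Meka as a black box, used only to get $k\ge\exp(\log^{c}N)/10$ via $R'=R\cap A_i$. Your sketch is a faithful outline of the cited argument itself (H\"older lifting, symmetrisation and spectral-positivity unbalancing, sifting, Croot--Sisask almost periodicity, and a Bohr-set increment over $O(\log(1/\alpha))$ steps of polylogarithmic rank cost), so there is nothing in the paper to compare it against beyond the citation.

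Two small corrections to your roadmap. First, the passage $\|f*f\|_{2k}\le\|f\circ f\|_{2k}$ is a Cauchy--Schwarz inequality in physical space; $\widehat{f\circ f}\ge 0$ is used only in the unbalancing. Second, in the Bohr-set version the moments must be taken against a weight with nonnegative Fourier transform, such as $\mu_{B'}\circ\mu_{B'}$. This is automatic for a subspace $V$, where $\widehat{\mu_V}=1_{V^\perp}$, but not for $\mu_{B'}$ itself.
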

Notice that if in Problem~2 we take the colour with the largest intersection with $R$ (say $A_i$) and define $R'=R\cap A_i$ then $R'$ must be free of 3-term arithmetic progressions. Thus $N/10k\leq|R|/k\leq|R'|\leq \exp(-\log^{c} N)N$ and we get $k \geq \exp(\log^{c} N)/10$ which gives a much stronger lower-bound.

The same reasoning can be used in $\mathbb{F}_3^n$ to show $|R|=O(2.756^{n})$.

This way of solving the question of Roth seems somewhat unsatisfactory. It relies solely on the fact that $R$ contains a structure that is impossible to colour (it is the same with hilbert cube approach), everything outside of $R$ is disregarded. We propose the following problem which asks roughly the same question but leaves $R$ entirely uncoloured. This seems to force the question that Roth might have had in mind when asking about Problem 1.
\begin{problem}\label{problem:integers_asymmetric}
    Let $N$ be a positive integer and let $A_1\cup A_2\cup\cdots\cup A_k\cup R=[N]$ be a partitioning. Assume that if $x,y\in A_i$ for some $A_i$, then $(x+y)/2\notin R$ and that $|R|\geq N/10.$ What is the longest arithmetic progression contained in $R$?
\end{problem}
The reason we ask for longest arithmetic progression and not to lower bound $k$ is the following trivial construction: let $A_1,A_2,\cdots, A_k$ all be intervals with total length at most $9N/10$. Then for every $x,y\in A_i$ if $(x+y)/2$ is an integer then it is contained in $A_i$ so it is outside $R$.

The same problem can be asked in the finite field setting $\mathbb{F}_3^n$.
\begin{problem}\label{problem:fields_asymmetric}
    Let n be a positive integer and let $A_1\cup A_2\cup\cdots\cup A_k\cup R=\mathbb{F}_3^n$ be a partitioning. Assume that if $x,y\in A_i$ for some $A_i$, then $(x+y)/2\notin R$ and that $|R|\geq 3^n/10.$ What is the largest translate of a subspace contained in $R$?
\end{problem}

In section 2 we show that a translate of a subspace of dimension at least $n-O(k^2)$ can be found inside $R$ by the density increment method based on a large fourier coefficient and applying the Freiman-Ruzsa's Theorem. On the other hand we can easily construct a colouring where $R$ does not contain a translate of a subspace of dimension larger than $n-k$. 
To do that let us randomly choose $A_1,A_2,\cdots, A_k$ to be affine subspaces of codimension $2\log_3(k)$ and $R$ to be the complement. It is not hard to see that on average $R$ will contain an affine subspace of dimension at most $n-k$.

In section 3 we show equivalent reasoning in the integers and point out the limitation to the iteration strategy. By iterating on arithmetic progressions we show is that $R$ contains an arithmetic progression of length $N^{\exp(-O(k\log (2k)))}$.

Here the gap between the lower bound and the upper bound we can prove is significant - by taking $A_i$'s to be multiples of primes of order $N^{1/(k+1)}$ we can construct $R$ without progressions longer than $N^{1/(k+1)}$. 

In section 4 we prove a more efficient version of density increment lemma which uses a range of tools: Chang's Lemma~\cite{chang}, Plünnecke's inequality~\cite{plunnecke}, Bogolyubov-Ruzsa Lemma~\cite{freiman, sanders} and Balog-Szemerédi-Gowers Theorem~\cite{balog-szemeredi, gowers1998}. As a consequence we show that $R$ contains a progression of length $N^{\exp(-O(k\log\log(2k)))}$. This means the correct extremal value is $N^{c_k}$, where $c_k$ depends only on $k$ and
$$-k\log\log(2k)\ll \log c_k\leq -\log(k+1).$$

\section{An argument in the model case $\mathbb{F}_3^n$}
For the sake of presentation we start with the case $\mathbb{F}_3^n$ in which the method works smoothly. This argument can be used to show $|R|\leq 3^{n-n/4k}$ in Problem~1, yet again because cap set bounds are very strong in $\mathbb{F}_3^n$ the approach described in the previous section where we directly use the 3-AP free bounds is superior. Thus we focus on Problem~3 where that method does not apply.

In the whole document we use well-known results in Discrete Fourier Analysis and the theory of Bohr sets. For the standard identities and theorem such as convolution identity, fourier inversion and Parseval's Theorem we refer to \cite[Chapter~4]{tao-vu}.
\begin{lemma}\label{lem:fields_increment}
Let $R\subseteq \mathbb{F}_3^n$ and $A\subseteq \mathbb{F}_3^n$ with $|A|\geq 3^n\beta$. Suppose that there are no solutions to the equation $x+y=2r$ where $x,y\in A$ and $r\in R$. Then there exists a subspace $V$ of codimension $1$ such that for some translate $t$ we have $$|(t+V)\cap R|/|V|\geq \Bigl(1+\frac{\beta}{2}\Bigr)|R|/3^n.$$
\end{lemma}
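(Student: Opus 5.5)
The plan is the standard Fourier-analytic density increment argument for three-term equations in $\mathbb{F}_3^n$. Write $\rho=|R|/3^n$ and $\alpha=|A|/3^n\geq\beta$. Use the normalised transform $\widehat{f}(\xi)=\mathbb{E}_x f(x)\omega^{-x\cdot\xi}$ with $\omega=e^{2\pi i/3}$. In $\mathbb{F}_3^n$ we have $2r=-r$, so the equation $x+y=2r$ is the same as $x+y+r=0$. Counting solutions with $x,y\in A$, $r\in R$ (including $x=y=r$, which the hypothesis also excludes) gives
$$0=\mathbb{E}_{x,y}1_A(x)1_A(y)1_R(-x-y)=\sum_{\xi}\widehat{1_A}(\xi)^2\,\widehat{1_R}(\xi)$$
by the convolution identity and Fourier inversion.

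The $\xi=0$ term equals $\alpha^2\rho$. Hence
$$\Bigl|\sum_{\xi\neq0}\widehat{1_A}(\xi)^2\widehat{1_R}(\xi)\Bigr|=\alpha^2\rho.$$
We bound the left side by $\max_{\xi\neq0}|\widehat{1_R}(\xi)|\cdot\sum_\xi|\widehat{1_A}(\xi)|^2$, and Parseval gives $\sum_\xi|\widehat{1_A}(\xi)|^2=\alpha$. So there is some $\xi\neq0$ with
$$|\widehat{1_R}(\xi)|\geq\alpha\rho\geq\beta\rho.$$

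Now convert this large coefficient into a density increment on a coset of $V=\xi^{\perp}$, a subspace of codimension $1$. Its cosets are $V_j=\{x:x\cdot\xi=j\}$ for $j\in\mathbb{F}_3$. Let $\rho_j=|R\cap V_j|/|V|$, so that $\tfrac13(\rho_0+\rho_1+\rho_2)=\rho$. Since $\sum_j\omega^{-j}=0$, we can subtract the mean and write
$$\widehat{1_R}(\xi)=\tfrac13\sum_j(\rho_j-\rho)\omega^{-j}.$$
The triangle inequality then gives $\sum_j|\rho_j-\rho|\geq3\beta\rho$.

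The deviations $\rho_j-\rho$ sum to zero, so their positive parts account for half the total absolute deviation, namely at least $3\beta\rho/2$. There are at most three such positive terms, but a cleaner estimate is available. If only one $j$ has $\rho_j>\rho$, that single term already exceeds $3\beta\rho/2$. If two are positive, then one negative term of size at least $3\beta\rho/2$ is balanced by two positive terms, so the larger of them is at least $3\beta\rho/4$. In either case some $j$ satisfies $\rho_j\geq(1+\beta/2)\rho$, and taking $t\in V_j$ finishes the proof.

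No step is a real obstacle here. The only care needed is in the constants: using the hyperplane structure (three cosets, deviations summing to zero) rather than a cruder $L^\infty$ bound is what yields exactly the factor $1+\beta/2$ claimed.
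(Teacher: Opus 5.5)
Your proof is correct and follows the paper's argument. You use Parseval to find a nontrivial Fourier coefficient of size at least $\beta$ times the density, then turn it into a positive deviation on one of the three cosets of its kernel hyperplane. The only differences are cosmetic: you use $2r=-r$ to work with $1_R$ directly instead of with $1_{2\cdot R}$ and transferring the increment back at the end, and your positive-part bookkeeping is more explicit than the paper's, in fact giving $1+3\beta/4$.
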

\begin{proof}
Let us define Fourier Transform of $f:\mathbb{F}_3^n\rightarrow \mathbb{C}$ as 
$$\hat{f}(\gamma):=\mathop{\mathbb{E}}_x f(x)\overline{\gamma(x)},$$
where $\gamma$ is an element of the dual group of $\mathbb{F}_3^n$.\\
From our assumption we have
$$\sum_x 1_{2\cdot R}(x)1_A*1_A(x) = 0,$$
which by Plancherel's Theorem can be written as
$$\sum_{\gamma}\widehat{1_{2\cdot R}}(\gamma)\overline{\widehat{1_A}(\gamma)^2}=0.$$
We separate the trivial term $\gamma=1$ to get
$$\frac{1}{3^{3n}}|2\cdot R||A|^2 = 0 -\sum_{\gamma\neq 1}\widehat{1_{2\cdot R}}(\gamma)\overline{\widehat{1_A}(\gamma)^2}.$$
Write $M:=\mathop{\mathrm{max}}_{\gamma\neq 1}|\widehat{1_{2\cdot R}}(\gamma)|$, then we can estimate
$$\frac{1}{3^{3n}}|2\cdot R||A|^2\leq M\sum_{\gamma\neq 1}|\widehat{1_A}(\gamma)|^2 \leq \frac{M}{3^{n}}|A|.$$
Rearranging for $M$ and estimating $|A|\geq 3^n\beta$ we get
$$M\geq \frac{|2\cdot R|\beta}{3^n}.$$
Let $\gamma_{0} \neq 1$ be the character achieving the maximum $M$ and define
$$V = \{x\in \mathbb{F}_3^n : \gamma_0(x)=1\}.$$ Since $\gamma_{0}$ is non-trivial, $V$ is a subspace of codimension 1. Let $\alpha_0,\alpha_1,\alpha_2$ be densities of $2\cdot R$ within the three cosets of $V$. On these cosets $\gamma_0$ takes values $1,\omega,\omega^2$ respectively, where $\omega = \exp(2\pi i /3)$. We can then write
$$\frac{1}{3} \left| \alpha_0 + \alpha_1 \bar{\omega} + \alpha_2 \bar{\omega}^2 \right| = |\widehat{1_{2\cdot R}}(\gamma_{0})|$$
Since $1+\bar{\omega}+\bar{\omega}^2=0$ we can write
$$|(\alpha_0-\alpha)+(\alpha_1-\alpha)\bar{\omega}+(\alpha_2-\alpha)\bar{\omega}^2|= 3M.$$
Thus for some $i\in\{0,1,2\}$ we have
$$\alpha_i-\alpha \geq \frac{|2\cdot R|\beta/2}{3^n},$$
which is the desired density increment of $2\cdot R$ on a translate of $V$. Notice that a factor of $1/2$ has to be present because we are only interested in density increments and not decrements. If $2r\in t+V$ for some $r\in R$ then $r\in t/2+V$ so we automatically get the same increment for $R$ on a translate of the same subspace.
\end{proof}
To prove existence of large affine subspace in $R$ we will iterate Lemma~\ref{lem:fields_increment} and after the last iteration use the following version of the Freiman-Ruzsa Theorem coming from~\cite{even-zohar-lovett}. It is likely that by using the Bogolyubov-Ruzsa lemma as in the proof of Lemma~\ref{lem:termination} we could improve the bound $O(k^2)$, however it would much complicate the proof and the priority of this section is clear presentation.
\begin{theorem}[Even-Zohar, Lovett]\label{thm:freiman_ruzsa}
Let $A\subseteq \mathbb{F}_3^n$ and $|A+A|\leq K|A|.$ Then $A$ is contained in an affine subspace of size at most $3^{C_{FR}K}|A|$.
\end{theorem}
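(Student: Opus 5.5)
The plan follows the compression strategy of Even-Zohar and Lovett, rather than the covering-lemma route. The covering argument (cover $2A-2A$ by $K^{O(1)}$ translates of $A-A$ and take the span) only gives an exponent polynomial in $K$. The target is a bound of the form $|\langle A-A\rangle|\leq 3^{C_{FR}K}|A|$, which suffices because $A$ lies in the affine subspace $a+\langle A-A\rangle$ for any $a\in A$. I first normalise: translate so that $0\in A$, and restrict to $V=\langle A\rangle$, writing $\dim V=d$. It then suffices to prove $3^{d}\leq 3^{CK}|A|$, i.e. $d\leq CK+\log_3|A|$. Choose a basis $e_1,\dots,e_d$ of $V$ inside $A$, which is possible since $A$ spans. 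Identify $V$ with $\mathbb{F}_3^d$ and regard each element as a word in $\{0,1,2\}^d$.

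\textbf{Step 1 (compressions).} For each coordinate $i$, define the compression $C_i$ acting on the lines parallel to $e_i$: on each such line, replace the fibre of $A$ by an initial segment $\{0,1,\dots,m-1\}$ of the same size $m\in\{0,1,2,3\}$. I will verify the standard facts: $|C_i(A)|=|A|$, $|C_i(A)+C_i(A)|\leq|A+A|$, and iterating compressions terminates in a set $D$ that is a down-set in the coordinatewise order on $\{0,1,2\}^d$. The sumset inequality is the delicate point. In $\mathbb{F}_3$, a sum of initial segments of sizes $m_1,m_2$ has size $\min(3,m_1+m_2-1)$, which equals the Cauchy--Davenport lower bound for any two fibres. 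Hence fibrewise $|C_i(A)_x+C_i(A)_y|\leq|A_x+A_y|$, and taking unions over pairs of fibres gives the inequality. Compression does not increase the dimension-type quantity we need either: the basis vectors $e_j$ survive as coordinate unit vectors, so $D$ still contains $0,e_1,\dots,e_d$.

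\textbf{Step 2 (down-sets).} For a down-set $D\subseteq\{0,1,2\}^d$ containing all unit vectors with $|D+D|\leq K|D|$, I want to prove $d\leq CK+\log_3|D|$. Let $D_1=D\cap\{0,1\}^d$, itself a down-set of the Boolean cube. For each $x\in D$ and each coordinate $i$ with $x_i=0$, the element $x+e_i$ or $x+2e_i$ lies in $D+D$. The idea is to count, for a typical $x\in D$, the number of free coordinates $i$ with $x_i=0$. On the one hand, a down-set of size $|D|$ has average support at most about $\log_2|D|$, so a typical $x$ has at least $d-O(\log|D|)$ zero coordinates. On the other hand, a Kruskal--Katona/edge-isoperimetric count shows that the points $x+e_i$ in $D+D\setminus D$ force $|D+D|\gtrsim\frac{d-\log_3|D|}{O(1)}|D|$. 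Combining the two gives $K\gg d-\log_3|D|$, which is the desired inequality since $|D|=|A|$.

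\textbf{Main obstacle.} I expect Step 2 to be the hardest part: getting a lower bound on $|D+D|/|D|$ that is linear in the excess dimension $d-\log_3|D|$. The naive count double-counts, because many pairs $(x,i)$ can produce the same point of $D+D$. I would try first the edge-isoperimetric inequality on the grid $\{0,1,2\}^d$. It says that a down-set has at most $O(|D|\log_3|D|)$ pairs $(x,i)$ with $x+e_i\in D$, while there are $d|D|$ pairs $(x,i)$ in total. At least $(d-O(\log_3|D|))|D|$ of these pairs therefore leave $D$. Each point $y\in D+D$ arises from at most $|{\rm supp}(y)|$ such pairs, and supports are bounded in terms of $\log|D|$, so this should yield a linear lower bound on $|D+D|-|D|$. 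This bound in turn controls how far $\dim\langle A\rangle$ can exceed $\log_3|A|$. If this accounting loses more than a constant factor, I would fall back on inducting on $d$, splitting $D$ by its last coordinate into $D^{(0)}\supseteq D^{(1)}\supseteq D^{(2)}$ and tracking how the sumset grows across the three layers.
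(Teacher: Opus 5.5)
The paper does not prove this statement; it imports it from Even-Zohar--Lovett, so your argument has to stand on its own. Your Step~1 does. Fibrewise Cauchy--Davenport in $\mathbb{F}_3$, termination of the compressions, and survival of $0,e_1,\dots,e_d$ all check out. So you correctly reduce to a down-set $D\subseteq\{0,1,2\}^d$ containing the unit vectors, with $|D|=|A|$ and $|D+D|\le K|D|$.

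The gap is Step~2, which carries the entire content of the theorem. The target $d\le CK+\log_3|D|$ needs coefficient exactly $1$ in front of $\log_3|D|$. Anything of the form $d\le CK+O(\log|D|)$ only gives $|\langle A\rangle|\le 3^{CK}|A|^{O(1)}$. Your free-coordinate count rests on $|\mathrm{supp}(x)|\le\log_2|D|$, which already has the wrong coefficient. Dividing by multiplicities as large as $|\mathrm{supp}(y)|\approx\log|D|$ then yields at best $d\lesssim K\log|D|$, i.e.\ $|\langle A\rangle|\le|A|^{O(K)}$.

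This loss is not a bookkeeping artefact. No argument that only counts the points $x+e_i$, i.e.\ only uses $D+\{0,e_1,\dots,e_d\}\subseteq D+D$, can work. Take $D=\{x\in\{0,1\}^d:|\mathrm{supp}(x)|\le w\}$ with $w=\lfloor\sqrt d\rfloor$.
\begin{itemize}
\item $D$ is a down-set containing every $e_i$, and $\log_3|D|=O(\sqrt d\log d)$, so $d-\log_3|D|=(1-o(1))d$.
\item $D+\{0,e_1,\dots,e_d\}$ consists of three parts: $D$ itself; the Boolean vectors of weight $w+1$, of which there are at most $\frac{d}{w+1}|D|$; and the vectors $x+e_i$ with $x_i=1$, of which there are at most $\sum_{x\in D}|\mathrm{supp}(x)|\le w|D|$.
\item Hence $|D+\{0,e_1,\dots,e_d\}|=O(\sqrt d)|D|$.
\end{itemize}
So no inequality $|D+\{0,e_1,\dots,e_d\}|\ge c(d-\log_3|D|)|D|$ can hold, even though $|D+D|$ is enormous for this $D$.

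The missing idea is to exploit sums of two non-trivial elements of $D$. The natural vehicle is the induction you mention only as a fallback:
\begin{itemize}
\item write $D=\bigcup_{j=0}^{2}D^{(j)}\times\{j\}$ with $D^{(0)}\supseteq D^{(1)}\supseteq D^{(2)}$;
\item use that the fibres of $D+D$ contain $D^{(0)}+D^{(0)}$, $D^{(0)}+D^{(1)}$ and $(D^{(1)}+D^{(1)})\cup(D^{(0)}+D^{(2)})$;
\item prove something like $|D+D|\ge\bigl(1+c(d-\log_3|D|)\bigr)|D|$ by induction on $d$.
\end{itemize}
That induction must handle two complications: the lower layers may span a smaller space, and they may be much smaller than $D^{(0)}$. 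Until that inequality is established, your proposal reduces the theorem to down-sets but does not prove it.
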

\begin{theorem}\label{thm:f3_subspace}
Let $k,n$ be positive integers and let $A_1\cup A_2\cup\cdots\cup A_k\cup R=\mathbb{F}_3^n$ be a partitioning. Assume that if $x,y\in A_i$ for some $A_i$, then $(x+y)/2\notin R$ and that $|R|\geq 3^n/10.$ Then $R$ contains an affine subspace of codimension at most $O(k^2).$
\end{theorem}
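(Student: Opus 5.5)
We run a density increment on $R$ using Lemma~\ref{lem:fields_increment}, passing to affine subspaces, and finish with Theorem~\ref{thm:freiman_ruzsa} once no colour class is large. Throughout we keep an affine subspace $W=t+V$ of codimension $d$ and the relative densities $\alpha_W=|R\cap W|/|W|$ and $\beta_i=|A_i\cap W|/|W|$, where $\sum_i\beta_i+\alpha_W=1$. Initially $d=0$ and $\alpha\ge 1/10$.

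\textbf{Step 1 (increment).} The hypothesis is translation- and dilation-compatible with affine subspaces: if $x,y\in A_i\cap W$ then $(x+y)/2\in W$, so $A_i\cap W$ and $R\cap W$ satisfy the hypothesis of Lemma~\ref{lem:fields_increment} after identifying $W$ with $\mathbb{F}_3^{n-d}$. Suppose some colour has $\beta_i\ge\delta$, for a threshold $\delta$ chosen later (about $1/k$). Then the lemma gives a codimension-one affine subspace $W'\subseteq W$ with $\alpha_{W'}\ge(1+\delta/2)\alpha_W$. Since $\alpha\le 1$ and starts at $1/10$, this can happen at most $O(\delta^{-1}\log 10)=O(1/\delta)$ times. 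Hence after $d=O(1/\delta)$ steps we reach $W$ on which every $\beta_i<\delta$.

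\textbf{Step 2 (choice of $\delta$ and small doubling).} At termination $\alpha_W\ge 1-k\delta$. Choosing $\delta=1/(4k)$ gives $\alpha_W\ge 3/4$ with codimension $O(k)$ so far. We want more than this: a full affine subspace inside $R$. Set $B:=R\cap W$, of density at least $3/4$ in $W$, and consider any $A_i\cap W$. Every midpoint of two of its points lies outside $R$, so $(A_i+A_i)\cap W\subseteq 2\cdot(W\setminus R)$ after translating suitably, and this set has density at most $1/4$. The aim is to extract one colour class $A=A_i\cap W$ with $|A+A|\le K|A|$. We also use that the union of the $A_i\cap W$ has density at least $1/k$-ish of the complement, at least whenever $R\cap W\neq W$.

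I think the correct bookkeeping is the following. Instead of stopping at $\beta_i<1/(4k)$, continue incrementing while some colour has $\beta_i\ge\alpha$-independent threshold $\delta$. Alternatively, argue on the complement $C=W\setminus R$. If $C\neq\emptyset$, the largest colour class $A$ has $|A|\ge|C|/k$, and $A+A\subseteq 2\cdot C$ inside $W$, so $|A+A|\le|C|\le k|A|$. Theorem~\ref{thm:freiman_ruzsa} with $K=k$ then puts $A$ inside an affine subspace $U$ with $|U|\le 3^{C_{FR}k}|A|$. Thus $A$ has density at least $3^{-C_{FR}k}$ in $U$, a subspace of $W$.

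\textbf{Step 3 (iterating with Freiman--Ruzsa).} We now reapply Lemma~\ref{lem:fields_increment}, and this is where the codimension $O(k^2)$ comes from. The density $3^{-O(k)}$ is too small to use directly. Instead we increment relative to a coset structure of $U$: the lemma is applied in $W$ with the colour $A$ of density $\beta$, but we may first pass to a coset of $U$. Cosets of $U$ partition $W$, and $R$ has density $\ge$ its average on some coset.

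The cleaner route is to observe the following. Once $R\cap W\ne W$, Step 2 shows some colour is concentrated on $U$, of codimension $O(k)$ in $W$, with relative density $3^{-O(k)}$. The midpoint condition then forces $R$ to avoid $U$'s "midpoint image", which is $U$ itself. Hence $R\cap U\subseteq U\setminus(A+A)/2$, and we recurse in a complementary coset of $U$ where $R$ has density at least $\alpha_W$. That colour then has, on each coset, lost a definite fraction, and after at most $k$ rounds (one per colour, each costing codimension $O(k)$ from Step 1 plus $O(k)$ from Freiman--Ruzsa) no colour remains. Therefore $W\subseteq R$ with codimension $O(k)\cdot k=O(k^2)$.

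\textbf{Main obstacle.} The hard step is making this per-colour elimination rigorous: showing that after each Freiman--Ruzsa application we can move to a coset where one colour disappears entirely (or its density drops substantially) while $R$'s density does not fall. I would first try to guarantee that the colour $A$ is essentially all of $U$, via a second increment in $U$ using the lemma with roles of density, and then pick a coset of $U$ avoiding $A$. The remaining steps are routine bookkeeping.
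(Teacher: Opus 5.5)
There is a genuine gap, and it is exactly where you flag your main obstacle. Your Step~1 and the doubling bound $|A+A|\le |C|\le k|A|$ are fine. The problem is what Theorem~\ref{thm:freiman_ruzsa} gives you afterwards: only $|U|\le 3^{C_{FR}k}|A|$.

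After your Step~1 the complement $C=W\setminus R$ can still have density up to $1/4$. So $|A|$ can be a constant fraction of $|W|$, and then $U$ may simply be $W$ itself, which excludes nothing. The claim that $U$ has codimension $O(k)$ in $W$ fails in both directions. It may be $0$. And when $|A|$ is small, $U$ can have codimension far larger than $O(k)$, so passing to a coset of $U$ costs too much.

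Nor is there any guarantee that a coset of $U$ other than $U$ carries $R$-density at least $\alpha_W$. The $R$-density on $U$ is bounded only by $1-|A|/|U|$, which can exceed $\alpha_W$; then the other cosets average below $\alpha_W$. Your proposed repair, a second increment inside $U$, also fails: $A$ has density only about $3^{-C_{FR}k}$ there, so Lemma~\ref{lem:fields_increment} gives a negligible gain. So the accounting ``$k$ rounds of $O(k)$'' has no working round to iterate.

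The missing idea is to make the complement exponentially small \emph{before} invoking Freiman--Ruzsa; that is where the $k^2$ comes from. Drop the fixed threshold $\delta$. At each step apply Lemma~\ref{lem:fields_increment} to the largest colour in $W$, whose density is at least $(1-\alpha_W)/k$. Then
\[
1-\alpha_{W'}\le\Bigl(1-\frac{\alpha_W}{2k}\Bigr)(1-\alpha_W)\le\Bigl(1-\frac{1}{20k}\Bigr)(1-\alpha_W),
\]
so the complement density halves every $O(k)$ steps. After $O(k^2)$ steps it is below $3^{-2C_{FR}k}$. With a fixed threshold you would instead need $\delta\approx 3^{-C_{FR}k}/k$, and hence $3^{O(k)}$ steps.

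Now $|U|\le 3^{C_{FR}k}|A|<|W|$, so $U$ is proper and lies in an affine hyperplane $H$ of $W$. Do not move to a coset of $U$. Instead, the two translates of $H$ parallel to it inside $W$ both miss $A$, and one of them has complement density at most $3/2$ times that of $W$. This removes one colour at the cost of a single extra codimension, and you never need $R$'s density to stay put. Since $(3/2)^k\,3^{C_{FR}k}\,3^{-2C_{FR}k}<1$, properness persists through all $k$ removals. The total codimension is $O(k^2)+k$.
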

\begin{proof}
    We will iteratively apply Lemma~\ref{lem:fields_increment} $z$ times ($z$ to be chosen later in the proof). In the first iteration we let $V_1=\mathbb{F}_3^n$ and $R_1=R$. $A^1$ will be the largest colour present in $V_1$ (we use superscript not to confuse with the colours $A_i$ from the partition). As a result of the lemma for $V_i,R_i,A^i$ we get $t_{i+1}+V_{i+1}$ on which $R_i$ has increased density. Because the equation $x+y=2r$ is translation invariant the same rule holds, that is if $A^{i+1}$ is the largest colour in $t_{i+1}+V_{i+1}$ and $x,y\in t_{i+1}+A^{i+1}$ we have $(x+y)/2\notin t_{i+1}+R_{i+1}$. Thus further iteration is possible. 

    Let $\alpha_i=|R_i|/|V_i|$ so that $\alpha_1\geq 1/10$, then Lemma~\ref{lem:fields_increment} guarantees
    $$\alpha_{i+1}\geq \Bigl(1+\frac{1-\alpha_i}{2k}\Bigr)\alpha_i.$$
    Clearly there exists an absolute constant $C$, such that $1-\alpha_i$ halves every $Ck$ steps, therefore
    $$1-\alpha_{z}\leq 2^{-\lfloor z/Ck \rfloor}.$$
    Choose $z = 4\lceil C_{FR}\rceil \lceil C\rceil k^2=O(k^2)$ so that $1-\alpha_z < 3^{-2C_{FR}k}.$
    
    We now remove all colours one by one from $V_z$ with $k$ applications of Theorem~\ref{thm:freiman_ruzsa}. Choose the largest colour in $V_z$, call it $A$. Because of our assumption $(A+A)\cap(2\cdot R)=\emptyset$ and so $|A+A|\leq |V_z|-|R|\leq k|A|$. That means $A$ is contained in an affine subspace of size at most $3^{C_{FR}k}|A|$. Because $3^{C_{FR}k}|A|<|V_z|$ this is a proper affine subspace. Its two parallel affine subspaces contain no points from $A$. On one of them density of the remaining colours is at most $\frac{3}{2}2^{-s}$. We repeat this argument $k$ times to obtain an affine subspace of codimension at most $z+k=O(k^2)$ contained entirely inside $R_z$.

    We always have a proper affine subspace containing any colour as
    $$3^{C_{FR}k}\Bigl(\frac{3}{2}\Bigr)^k2^{-s} < 1.$$
\end{proof}
\section{Limitations of the argument in the integers}
The iterative argument in the integers is similar. If $A,R\subseteq \mathbb{Z}_N$ we can translate Lemma~\ref{lem:fields_increment} without difficulty and find a large fourier coefficient of $1_{2\cdot R}$.

The purpose of this section is pointing out why iterative approach which worked for $\mathbb{F}_3^n$ is suboptimal in terms of $k$ when considering the case of the integers. To do so we prove a result corresponding to Lemma~\ref{lem:fields_increment} but assuming $A,R$ are subsets of a Bohr set with rank $d$ and width $\rho$. As it turns out, the increment we can obtain decreases exponentially with growing $d$. For that reason the lemma is basically only meaningful when $d$ is constant.

We can still recover a meaningful bound for Problem~3 by choosing an arithmetic progression, or alternatively a constant rank Bohr set on every step. This causes dramatic drop of the size of our ground set from $N$ to $N^{1/\exp(s)}$ after $s$ steps. It would be very interesting to see a way to navigate around this difficulty and possibly address Problems~1 and~2.

\begin{lemma}\label{lem:bohr_set_increment}
Let $N\geq 1$ be an odd integer and let $A,R\subseteq B$, where $B\subseteq \mathbb{Z}_N$ is a regular Bohr set of rank $d$ and width $\rho$. Let $|R|=\alpha|B|$ and $|A|= \beta |B|$. Suppose that there are no solutions to the equation $x+y=2r$ where $x,y\in A$ and $r\in R$.
Then there exists a Bohr set $B'$ of rank $d+1$ and width at least $\Omega(\rho\beta/2^{d+1})$ such that we have
$$|(t+B')\cap R|/|B'|\geq \Bigl(1+\frac{\beta}{2^{d+4}}\Bigr)\alpha.$$
\end{lemma}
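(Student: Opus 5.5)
The plan is to mimic the proof of Lemma~\ref{lem:fields_increment}, with $B$ playing the role of the ambient group and a narrower Bohr set $B'\subseteq B$ replacing the subspace. Since $N$ is odd, multiplication by $2$ is a bijection on $\mathbb{Z}_N$. I will use $2\cdot R$ and the density-increment statement for $R$ interchangeably: if $2\cdot R$ has increased density on $t+B'$, then $R$ has the same increased density on $t/2+B'$ (note $|2\cdot R|=|R|$ and $|2\cdot B'|=|B'|$).

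\textbf{Step 1: counting with a smoothed measure.} The hypothesis gives
$$\sum_x 1_{2\cdot R}(x)\,1_A*1_A(x)=0.$$
The function $1_A*1_A$ is supported on $B+B$, which is not a Bohr set of the same width. So rather than comparing $1_{2\cdot R}$ to a constant on $\mathbb{Z}_N$, I compare it to $\alpha$ times the normalised indicator of $2\cdot B$. Set $f:=1_{2\cdot R}-\alpha\,1_{2\cdot B}$, a balanced function. Then
$$0=\sum_x \alpha 1_{2\cdot B}(x)\,1_A*1_A(x)+\sum_x f(x)\,1_A*1_A(x).$$
The first term is not automatically large, because $A+A$ may miss $2\cdot B$. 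To fix this, choose a dilate $B_1=B_{\rho'}$ with $\rho'=c\rho\beta/2^{d+1}$, using regularity of $B$, and pass to the dense piece: by averaging, some translate of $B_1$ meets $A$ in density at least $\beta/2$, and I replace $A$ by that piece. Alternatively, one can compare the count $\langle 1_{2\cdot R},1_A*1_A\rangle$ with $\alpha\langle 1_{2\cdot B},1_A*1_A\rangle$. Either way the main term is about $\alpha\beta^2|B|^3$, up to a factor from regularity, so
$$\Bigl|\sum_x f(x)\,1_A*1_A(x)\Bigr|\gtrsim \alpha\beta^2|B|^3.$$

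\textbf{Step 2: finding a large Fourier coefficient.} Write the last sum in Fourier form, $\sum_\gamma \hat f(\gamma)\overline{\hat{1_A}(\gamma)^2}$, and bound it by $\max_\gamma|\hat f(\gamma)|\cdot\|\hat{1_A}\|_2^2$. Using Parseval, this yields a character $\gamma_0$ with
$$|\hat f(\gamma_0)|\gtrsim \alpha\beta|B|/N.$$
The difference from $\mathbb{F}_3^n$ is that $\gamma_0$ need not be trivial on $B$, and $f$ is balanced only against $B$, not globally.

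\textbf{Step 3: converting the coefficient into an increment.} Let $B'$ be the Bohr set with frequency set $\operatorname{Spec}(B)\cup\{\gamma_0\}$, so it has rank $d+1$, and width $\rho''=c\rho\beta/2^{d+1}$. On $B'$, the character $\gamma_0$ is almost constant: $|\gamma_0(x)-1|\le 2\pi\rho''$. Since $B$ is regular, $1_B*\mu_{B'}$ agrees with $1_B$ except on a set of relative measure $O(d\rho''/\rho)$. Hence
$$|\hat f(\gamma_0)|\approx |\langle f*\mu_{B'},\gamma_0\rangle|\le \|f*\mu_{B'}\|_1+\text{error}.$$
Because $f$ has mean zero, $\sum_x f*\mu_{B'}(x)=0$. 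So the $L^1$ mass splits equally between positive and negative parts, and this is where the factor $1/2$ enters. Some $t$ then satisfies
$$f*\mu_{B'}(t)\ge c\,\alpha\beta/2^{d+4},$$
which, after subtracting the boundary error, says that $2\cdot R$ has density at least $(1+\beta/2^{d+4})\alpha$ on $t+B'$.

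\textbf{Main obstacle.} The hardest step is controlling the losses in Steps~1 and~3. The normalisation loses a factor $|B|/N$ against the trivial-coefficient computation, and that factor must be recovered. I would recover it by averaging over $t$ only inside $B$, and by using $|B_{\rho}|\ge 2^{-d}|B_{2\rho}|$-type bounds to compare $|B+B|$ with $|B|$. This comparison is where the $2^{-d}$ loss in the increment originates. Throughout, the width $\rho''$ must be kept small enough that all regularity error terms are below half the main term.
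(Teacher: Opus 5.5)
There is a genuine gap in Step~1, and it sits exactly where the paper's key idea is. You assert that the main term $\alpha\langle 1_{2\cdot B},1_A*1_A\rangle$ is of order $\alpha\beta^2|B|^2$ "up to regularity", but this is not a regularity issue. Write $B=\{x:\|\theta_j x\|\le\rho\}$ and $h=2^{-1}\in\mathbb{Z}_N$. For $x\in B$, the number $h\theta_j x$ lies within $\rho/2$ of $0$ or of $1/2$ modulo $1$; call this bit $\epsilon_j(x)$. For $x,y\in B$ (and $\rho<1/4$) we have $h(x+y)\in B$, i.e.\ $x+y\in 2\cdot B$, if and only if $\epsilon(x)=\epsilon(y)$; for a mixed pair, some coordinate of $h(x+y)$ is within $\rho$ of $1/2$. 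Hence $\langle 1_{2\cdot B},1_A*1_A\rangle=\sum_{v\in\{0,1\}^d}|A_v|^2$ over the $2^d$ parity classes, and this is only about $2^{-d}|A|^2$ when $A$ is spread evenly. No comparison of $|B+B|$ with $|B|$ can detect this, because such a comparison says nothing about how much of $1_A*1_A$ lands on the particular set $2\cdot B$. The missing step is the paper's first move: pass to the largest parity class $A'\subseteq A$. Then $A'+A'\subseteq 2\cdot B$, $|A'|\ge |A|/2^d$, the main term is exactly $\alpha|A'|^2$, and Parseval gives $|\hat f(\gamma_0)|\ge \alpha|A'|/N\ge 2^{-d}\beta|R|/N$. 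This is the true source of the $2^{-d}$. Your Step~2 bound, which omits it, is unjustified, and your explanation of the loss in the final paragraph puts it in the wrong place.

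Your Option~1 does not repair this, for two reasons. First, $\epsilon(t+z)=\epsilon(t)+\epsilon(z) \pmod 2$ for $t+z$ inside $B$, so the same parity splitting occurs inside every translate $t+B_{\rho'}$. Second, replacing $A$ by $A\cap(t+B_{\rho'})$ while keeping $f$ normalised on all of $2\cdot B$ shrinks the Fourier lower bound by the factor $|B_{\rho'}|/|B|$, which is far smaller than $2^{-d}$.

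A smaller point concerns Step~3. Since $f$ lives on $2\cdot B$, which is the Bohr set with frequencies $h\theta_j$, you should build $B'$ on $\{h\theta_j\}\cup\{\gamma_0\}$ and invoke the regularity of $2\cdot B$. If you use $\mathrm{Spec}(B)$ instead, $2\cdot B+B'$ is not close to $2\cdot B$, by the same parity phenomenon. Pulling back then gives the increment for $R$ on a translate of $2^{-1}\cdot B'$ rather than of $B'$. This is harmless, since $2^{-1}\cdot B'$ has the same rank and width.
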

\begin{proof}
We begin by choosing a possibly large subset $A'\subseteq A$ with the property $A'+A'\subseteq 2\cdot B$. It is not hard to see that by choosing only elements with the same parity after multiplication by all characters generating $B$ we get $|A'|\geq |A|/2^d$.
Consider the function $f(x)=1_{2\cdot R}(x)-\alpha 1_{2\cdot B}(x)$. Define its fourier coefficients as 
$$\hat{f}(\gamma):=\mathop{\mathbb{E}}_x f(x)\overline{\gamma(x)},$$
where $\gamma$ is an element of the dual group of $\mathbb{Z}_N$. Notice that $\hat{f}(\gamma_0)=0$, where $\gamma_0=1$ is the trivial character. Then by Parseval's Theorem we have
$$\sum_{x}f(x)1_{A'}*1_{A'}(x) = N^2\sum_{\gamma}\hat{f}(\gamma)\overline{\hat{1_{A'}}(\gamma)}^2.$$
as we use unnormalised convolution. By using the assumption we have
$$\alpha|A'|^2=\alpha\sum_{x\in {2\cdot B}}1_{A'}*1_{A'}(x)= \Bigl|\sum_x f(x)1_{A'}*1_{A'}(x)\Bigr|\leq N^2\sum_{\gamma}|\hat{f}(\gamma)||\hat{1_{A'}}(\gamma)|^2.$$
Notice that if we were considering $A$ instead of $A'$, then the first equality would not be true. In this sense the loss of the $2^d$ factor is necessary. As a consequence we get
$$\frac{\beta |A'||R|}{2^dN^2} \leq \frac{\alpha|A'|^2}{N^2} = \sum_{\gamma}|\hat{f}(\gamma)||\hat{1_{A'}}(\gamma)|^2 \leq |\hat{f}(\gamma_{\max})|\sum_{\gamma}|\hat{1_{A'}}(\gamma)|^2 = |\hat{f}(\gamma_{\max})|\frac{|A'|}{N}$$
and thus $|\hat{f}(\gamma_{\max})|\geq \frac{\beta}{2^d}\cdot\frac{|R|}{N}$. 

To obtain a density increment, choose $B'$ to be generated by the same characters as $B$ with the addition of $\gamma_{\max}$. Let the width be $\rho'\gg \rho\beta/2^{d+1}$ so that $B'+B\subseteq B_{1+\rho'/\rho}$ and $|B_{1+\rho'/\rho}|\leq (1+\beta/2^{d+4})|B|$ because of regularity of $B$. By considering $\sum_t 1_B*1_{B'}(t)f(t)$ in a standard way we find a density increment on a translate of $B'$.
\end{proof}

Since we cannot allow the $2^{-d}$ factor we will use the following classic lemma (can be accessed in~\cite[Exercise~4.4.5]{tao-vu}) to keep the rank of the Bohr set at $1$, that is to always be in the situation where the Bohr set is an interval in the group $\mathbb{Z}_M$ for an appropriate $M$.

\begin{lemma}\label{lem:find_progression}
Let $N\geq 1$ be an integer and let $B\subseteq\mathbb{Z}_N$ be a Bohr set of rank $d$ and width $\rho\gg1/d^2$. Suppose that $d^{100d}\leq N$, then $B$ contains an arithmetic progression of length at least $N^{1/2d}$.
\end{lemma}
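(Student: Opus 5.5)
The plan is to find a long progression inside $B$ via a pigeonhole argument on the dilates of a single element, using the standard simultaneous Dirichlet approximation. Write $B=\{x\in\mathbb{Z}_N:\|x\gamma_j/N\|\le\rho \text{ for } j=1,\dots,d\}$, where the $\gamma_j$ are the frequencies generating $B$ and $\|\cdot\|$ is the distance to the nearest integer. Fix a length parameter $L=\lfloor N^{1/2d}\rfloor$. The goal is a nonzero $x\in\mathbb{Z}_N$ such that every $jx$ with $|j|\le L$ lies in $B$; then $\{x,2x,\dots,Lx\}$ (or the symmetric progression $\{-Lx,\dots,Lx\}$) is an arithmetic progression in $B$ of length at least $N^{1/2d}$.

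\textbf{Step 1 (pigeonhole).} Consider the $N$ vectors $v(x)=(x\gamma_1/N,\dots,x\gamma_d/N)\in\mathbb{T}^d$ for $x\in\mathbb{Z}_N$. Partition $\mathbb{T}^d$ into $Q^d$ boxes of side $1/Q$ with $Q=\lfloor N^{1/d}\rfloor$. If $N>Q^d$, two distinct $x_1,x_2$ fall in the same box, so $x=x_1-x_2\neq0$ satisfies $\|x\gamma_j/N\|\le 1/Q$ for all $j$. Since $Q^d\le N$, we may need to shrink $Q$ slightly, e.g.\ $Q=\lfloor (N/2)^{1/d}\rfloor$, so that a collision is forced.

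\textbf{Step 2 (dilates stay in $B$).} For $|j|\le L$ we have $\|jx\gamma_k/N\|\le L/Q$. We want $L/Q\le\rho$, i.e.\ $L\le \rho Q\approx \rho N^{1/d}$. With $\rho\gg 1/d^2$ it suffices that $N^{1/d}/d^2\gg N^{1/2d}$, i.e.\ $N^{1/2d}\gg d^2$, which follows from $N\ge d^{100d}$ since then $N^{1/2d}\ge d^{50}$. This also absorbs the losses from the floors and from the factor $2^{-1/d}$.

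\textbf{Step 3 (distinct terms).} The progression $x,2x,\dots,Lx$ has distinct terms only if the order of $x$ in $\mathbb{Z}_N$ exceeds $L$. If the order $m$ of $x$ is at most $L$, the whole cyclic subgroup $\langle x\rangle$ lies in $B$; this is still a progression, but possibly a short one. I expect this to be the main obstacle. To handle it, pass to an $x$ of large order: choose $x$ as a collision with minimal norm, or note that if $m\le L$ then the subgroup $H=\langle x\rangle$ of size $m$ satisfies $\gamma_k x\equiv 0$ nearly for all $k$, and iterate the pigeonhole inside the quotient. A cleaner route is to restrict the pigeonhole to $x\in\{0,1,\dots,N'\}$ with $N'<N$ (integers, not residues). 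The difference $x$ then satisfies $0<|x|<N'$, but small order still requires $mx\equiv0$, so $N/\gcd(x,N)\le L$. Handling this requires either $N$ prime (as typically arranged in the paper's setting) or an argument that when $\gcd(x,N)$ is large one works in the subgroup $\gcd(x,N)\mathbb{Z}_N\cong\mathbb{Z}_{N/g}$. The first thing I would try is to count: the number of $x$ with order at most $L$ is at most $\sum_{m\le L} m\le L^2\le N^{1/d}$. Running the pigeonhole with more boxes than $N$ minus this count excludes such $x$ up to constant losses. This works for $d\ge 2$ after adjusting $Q$ by a constant factor, and for $d=1$ the statement is direct since $B$ is an interval progression itself.
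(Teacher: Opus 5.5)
\textbf{Gap at Step 3.} The paper gives no proof of this lemma; it only cites an exercise in Tao--Vu. So the comparison is with the standard Dirichlet-pigeonhole argument, and your Steps 1--2 carry that out correctly. The gap is Step 3, which you flag but do not close, and the fix you settle on does not work.

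Excluding the elements of order at most $L$ from the pool of points is beside the point. What must have large order is the \emph{difference} $x=x_1-x_2$. If $N$ and all $\gamma_j$ are even, then $x_1$ and $x_1+N/2$ have identical images under $v$, so they always land in the same box. Such a pair can therefore be the collision you find, and its difference has order $2$ whatever the orders of $x_1,x_2$.

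The honest counting version needs a box containing more than $|H|+1$ points, where $H$ is the set of elements of order at most $L$. With your bound $|H|\le L^2=N^{1/d}$ this forces $Q^d\lesssim N^{1-1/d}$. That is a loss of a factor $N^{1/d^2}$ in $Q$, not a constant. At $d=2$ it leaves $Q\approx N^{1/4}=L$, so $L\le\rho Q$ fails. You also cannot assume $N$ prime: the paper applies the lemma in $\mathbb{Z}_{2N+1}$ and in $\mathbb{Z}_N$ for arbitrary $N$. Likewise, for $d=1$ the set $B$ is an interval progression only when $\gcd(\gamma_1,N)=1$.

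\textbf{The missing idea.} Separate off the kernel $K=\{x:\ x\gamma_j\equiv 0 \pmod N \text{ for all } j\}$ of $v$. It lies in $B$, and as a subgroup of a cyclic group it is itself an arithmetic progression of length $|K|$.
\begin{itemize}
\item If $|K|\ge L$, you are done.
\item Otherwise $v$ takes $N/|K|>N/L$ distinct values. Apply the pigeonhole principle to these \emph{values}, with $Q^d\le N/L$. This gives $x$ with $v(x)\neq 0$ and $\|x\gamma_j/N\|<1/Q$ for all $j$.
\end{itemize}
In the second case, let $m$ be the order of $x$. Every coordinate of $v(x)$ lies in $\frac1m\mathbb{Z}/\mathbb{Z}$, and a nonzero one has norm at least $1/m$. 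Hence $m>Q\ge L$, and $0,x,\dots,Lx$ are distinct elements of $B$.

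This costs only $Q\approx N^{1/d-1/(2d^2)}$. The condition $L\le\rho Q$ then needs $N^{(d-1)/(2d^2)}\gtrsim\rho^{-1}$. Here $\rho^{-1}\ll d^2$, while $N\ge d^{100d}$ gives $N^{(d-1)/(2d^2)}\ge d^{25}$ for every $d\ge 2$.

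Alternatively, use the fact that $B$ contains a proper generalised progression of rank at most $d$ and size at least $(\rho/d)^dN$, and take its longest direction. Properness removes the distinctness issue automatically.
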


Let us first prove a corollary which only deals with arithmetic progressions and not Bohr sets.
\begin{corollary}\label{cor:progressions}
Let $N\geq 1$ be an integer and let $A,R\subseteq [N]$, with $|A|\geq \beta N\geq N^{4/5}$. Suppose that there are no solutions to the equation $x+y=2r$ where $x,y\in A$ and $r\in R$.
Then there exists an arithmetic progression $P$ with length at least $N^{1/4}$ such that
$$|P\cap R|/|P|\geq \Bigl(1+\beta/32\Bigr)\frac{|R|}{N}.$$    
\end{corollary}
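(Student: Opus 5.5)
We embed $[N]$ into a cyclic group $\mathbb{Z}_{N'}$ with $N'$ an odd prime, $2N< N'\leq 4N$, so that no wraparound occurs. Then $A,R\subseteq\{1,\dots,N\}\subseteq\mathbb{Z}_{N'}$ and the equation $x+y=2r$ with $x,y\in A$, $r\in R$ has no solutions in $\mathbb{Z}_{N'}$ either. We could apply Lemma~\ref{lem:bohr_set_increment} with $d=1$ to an interval-like Bohr set. It seems cleaner, though, to redo its Fourier argument directly with $B=[N]$. Here the parity trick is replaced by splitting $A$ into its even and odd elements. Let $A'$ be the larger class, so $|A'|\geq|A|/2$ and $(A'+A')/2\subseteq[N]$ consists of integers.

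Set $f=1_{2\cdot R}-\alpha 1_{2\cdot[N]}$ with $\alpha=|R|/N$, computed in $\mathbb{Z}_{N'}$. Since $A'+A'\subseteq 2\cdot[N]$ and $(A'+A')\cap 2\cdot R=\emptyset$, the same computation as in Lemma~\ref{lem:bohr_set_increment} gives
$$\alpha|A'|^2=\Bigl|\sum_x f(x)1_{A'}*1_{A'}(x)\Bigr|\leq N'^2\,\|\hat f\|_\infty\sum_\gamma|\widehat{1_{A'}}(\gamma)|^2 .$$
Parseval then yields a character $\gamma=e(\xi x/N')$ with $|\hat f(\gamma)|\geq \alpha|A'|/N'^2\gg \beta|R|/N'$. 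Because $2$ is invertible mod $N'$, we can pull this back to a character $\gamma'$ with
$$\Bigl|\sum_{r\in R}\gamma'(r)-\alpha\sum_{x\in[N]}\gamma'(x)\Bigr|\gg\beta|R|,$$
that is, $g=1_R-\alpha1_{[N]}$ has a Fourier coefficient of size at least $c\beta\alpha N$. Keeping track of constants, we expect $c\geq1/8$ or better. This loses a factor $2$ from parity and a factor up to $4$ from $N'/N$, which fits the final $\beta/32$ after one more halving.

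Next we convert the large coefficient into a density increment on a progression. By Dirichlet's theorem there is $1\leq q\leq \sqrt N$ with $\|q\xi/N'\|\leq 1/\sqrt N$. Partition $[N]$ into progressions of common difference $q$ and length $L\approx \delta N^{1/2}$, where $\delta$ is a small multiple of $\beta$. On each such progression $\gamma'$ varies by at most $O(L q\cdot\|q\xi/N'\|/q)=O(\delta)$. The number of pieces is $\approx N/L$; incomplete pieces at the ends contribute $O(qL)$ elements, which is negligible compared with $\beta\alpha N$. Here we need $qL\ll\beta\alpha N$, which holds since $q\leq\sqrt N$, $L\leq N^{1/2}$ and $\beta\alpha N\gg N^{4/5}$. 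Freezing $\gamma'$ on each piece $P_j$ gives
$$\sum_j|g(P_j)|\geq \bigl(c-O(\delta)\bigr)\beta\alpha N .$$
Since $\sum_j g(P_j)\approx 0$, we have $\sum_j\bigl(|g(P_j)|+g(P_j)\bigr)\geq c'\beta\alpha N$. Hence some $P_j$ satisfies $g(P_j)\geq (c'/2)\beta\alpha|P_j|$, i.e.\ $|P_j\cap R|/|P_j|\geq(1+\beta/32)\alpha$.

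The length is $L\gg\beta N^{1/2}\geq N^{-1/5}N^{1/2}\geq N^{1/4}$ for large $N$, using $\beta\geq N^{-1/5}$. This is exactly where the hypothesis $\beta N\geq N^{4/5}$ enters. The main obstacle is the constant bookkeeping needed to land exactly on $\beta/32$. This involves three sources of loss: the parity loss, the embedding $N'\leq4N$ (one may instead choose $N'$ closer to $2N$, or use $\mathbb{Z}$-Fourier on $[N]$ directly), and the error from approximating $\gamma'$ by a constant plus the discarded boundary pieces. We would fix $\delta=\beta/100$ and check that the total loss stays within the factor $32$.
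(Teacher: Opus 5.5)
Your first half is the paper's argument. The case $d=1$ of Lemma~\ref{lem:bohr_set_increment} is exactly your parity split of $A$, the balanced function $1_{2\cdot R}-\alpha 1_{2\cdot[N]}$, and the Parseval bound.

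Your second half takes a genuinely different and more elementary route. The paper adjoins $\gamma_{\max}$ to the generator of the interval to form a rank-$2$ Bohr set $B'$ of width $\Omega(\beta)$. It gets the increment on a translate of $B'$ via regularity, then descends to a progression of length $N^{1/4}$ through Lemma~\ref{lem:find_progression} and an averaging step. You bypass Bohr sets with the classical Roth step: Dirichlet approximation of the frequency, a partition of $[N]$ into progressions of difference $q$ on which $\gamma'$ is nearly constant, and positive-part averaging. Your route gives explicit constants and sidesteps the width and averaging details that the paper's descent from $B'$ leaves implicit. The paper's route is the template it later reuses for higher-rank Bohr sets.

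Two corrections are needed.

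\textbf{The constant.} There is no loss from $N'/N$. Parseval gives $\|\hat f\|_\infty\geq \alpha|A'|/N'$ (not $/N'^2$). Multiplying back by $N'$ gives $\bigl|\sum_x g(x)\gamma'(x)\bigr|\geq\alpha|A'|\geq\alpha\beta N/2$, so $c=1/2$. Also apply Dirichlet to the frequency $2\xi/N'$ of $\gamma'$, not to $\xi/N'$, or absorb a factor $2$.

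\textbf{The boundary estimate.} This is the more serious problem: your justification is wrong. From $q\leq\sqrt N$ and $L\approx\delta\sqrt N$ you only get $qL\leq\delta N$, which is not $o(N^{4/5})$. Moreover, $\beta\alpha N\gg N^{4/5}$ would need $\alpha\gg 1$, which the corollary does not assume. Nothing prevents the discarded set, of size up to $\delta N\approx\beta N/100$, from carrying that much of the mass of $g$. That exceeds the main term $\alpha\beta N/2$ once $\alpha<1/50$, and it spoils both the freezing estimate and $\sum_j g(P_j)\approx 0$. For $\alpha\geq 1/10$, as in the application, the loss happens to be affordable, but not for the reason you give.

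The clean fix is to discard nothing. Each residue class mod $q$ has at least $\lfloor N/q\rfloor\geq\lfloor\sqrt N\rfloor\geq L$ elements, so cut it into blocks with lengths in $[L,2L)$. Then $\sum_j g(P_j)=0$ exactly. The freezing error is at most $4\pi\delta\sum_x|g(x)|=8\pi\delta\alpha(1-\alpha)N$, which now scales with $\alpha$. Taking $\delta=\beta/200$ gives $\sum_j g(P_j)^+\geq\frac12\bigl(\frac12-\frac{8\pi}{200}\bigr)\alpha\beta N>\alpha\beta N/6$. Hence some block has density at least $(1+\beta/6)\alpha$ and length at least $\lfloor\beta\sqrt N/200\rfloor\geq N^{1/4}$ for large $N$, for every value of $\alpha$.
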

\begin{proof}
Consider $A,R$ as subsets of $\mathbb{Z}_{2N+1}$. Clearly there are no solutions to the equation $x+y=2r$ with $x,y\in A$ and $r\in R$ because there is no wrap around modulo $2N+1$. Because $[N]$ can be viewed as a translate of rank 1 Bohr set (with width $1/4$) by Lemma~\ref{lem:bohr_set_increment} there exists a Bohr set $B'$ with rank 2 and width $\Omega(\beta)$ such that for some translate $t$ of $B'$ we have  
$$|(t+B')\cap R|/|B'|\geq \Bigl(1+\beta/32\Bigr)\frac{|R|}{N}.$$ 
By Lemma~\ref{lem:find_progression} we can find $P$ contained in a dilate of $B'$ of length at least $N^{1/4}$ and with simple averaging argument we can ensure the same or larger density of $R$ on this progression.

\end{proof}

Now by iterating Corollary~\ref{cor:progressions} we are able to show that $R$ contains an arithmetic progression of length at least $N^{\exp(-O(k\log (2k)))}$.
This is still much larger progression that the one guaranteed on the basis of density $1/10$ alone by Szemerédi's Theorem as proved by Gowers~\cite{gowers2001} which is about $\Omega(\log\log\log\log\log N)$.
We stop iterating once $R$ has large enough density for us to apply the following lemma successfully.
\begin{lemma}\label{lem:termination}
Let $k,N$ be positive integers with $k<N$ and let
$$A_1\cup A_2\cup\cdots\cup A_k\cup R=[N]$$ 
be a partitioning. Assume that if $x,y\in A_i$ for some $A_i$, then $(x+y)/2\notin R$ and that $|R|\geq (1-(10k)^{-6})N.$ Then $R$ contains an arithmetic progression of length at least $N^{\exp(-O(k\log\log (2k)))}$.  
\end{lemma}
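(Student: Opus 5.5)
We follow the strategy of the $\mathbb{F}_3^n$ proof of Theorem~\ref{thm:f3_subspace}: remove the colours one at a time, each time passing to a smaller structured set that avoids the removed colour. Over $\mathbb{F}_3^n$ the structured sets were affine subspaces and the tool was Theorem~\ref{thm:freiman_ruzsa}. Over the integers we use Bohr sets of rank $O(\log k)$, obtained from the Bogolyubov--Ruzsa lemma together with Chang's lemma. At the very end, Lemma~\ref{lem:find_progression} converts the final Bohr set into an arithmetic progression. The goal is that each colour costs only an $O(\log\log(2k))$ factor in the exponent, i.e.\ a rank of $\mathrm{polylog}(k)$ per step, rather than the $O(\log k)$ loss per step incurred by iterating Corollary~\ref{cor:progressions}.

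\textbf{Small doubling.} Embed $[N]$ in $\mathbb{Z}_{2N+1}$, so that there is no wrap-around. Let $A$ be the largest colour, so $|A|\ge |[N]\setminus R|/k$. Since $(A+A)\cap 2\cdot R=\emptyset$ and $A+A\subseteq 2\cdot[N]\cup(\text{few elements})$, we get
$$|A+A|\le |[N]\setminus R|+O(1)\le K|A|\quad\text{with } K\approx k.$$
So $A$ is a set of doubling at most about $k$, and density $\delta\approx |[N]\setminus R|/(kN)$ is tiny.

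\textbf{Finding the Bohr set.} Apply the Bogolyubov--Ruzsa lemma in Sanders' form to $A$. It gives a Bohr set $B$ of rank
$$d=O(\log^{4}K)=\mathrm{polylog}(k)$$
and width $\rho\gg 1/\mathrm{poly}(d)$, with $B\subseteq 2A-2A$. Rather than working inside $2A-2A$, the plan is the following, which mirrors the ``two parallel subspaces'' step of Theorem~\ref{thm:f3_subspace}. Because $A$ has small doubling, $A$ is covered by $K^{O(1)}$ translates of a Bohr set (or of $B$ itself). Hence, for a suitable dilate $B^*$ of $B$, the set $A$ occupies only a small proportion of the translates of $B^*$. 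Averaging over translates $t+B^*$ then yields one on which $A$ has density well below its global density, while the other colours together still have density at most about $k(10k)^{-6}$ times a factor $\exp(O(d))$ coming from regularity.

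If this covering route is too lossy, the fallback is to use Chang's lemma and Balog--Szemerédi--Gowers instead. The large Fourier coefficients of $1_A$ lie in a span of $\mathrm{polylog}(k)$ frequencies, and one takes $B^*$ to be the Bohr set of that spectrum. On a translate of $B^*$ the density of $A$ drops by a constant factor, and the density of $R$ increases correspondingly.

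\textbf{Iteration and termination.} Pass to the chosen translate of $B^*$ and remove the next colour. The non-wrap-around condition is preserved because $B^*$ lies in a dilate of a Bohr set of $\mathbb{Z}_{2N+1}$ on which the progression structure is inherited, after re-embedding via Lemma~\ref{lem:find_progression} into an interval of $\mathbb{Z}_M$. Each re-embedding turns $N$ into $N^{1/\mathrm{polylog}(k)}$. After $k$ steps the ground set has size $N^{\exp(-O(k\log\log 2k))}$ and is free of all colours, so it lies inside $R$. A final application of Lemma~\ref{lem:find_progression} gives the required progression.

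\textbf{Main obstacle.} The delicate point is controlling how the small error density $(10k)^{-6}$ of the remaining colours grows over the $k$ passes. A crude averaging argument multiplies this density by the reciprocal of the relative size of $B^*$ at each step, and over $k$ steps that would destroy the bound. The plan is to avoid this by showing that each step loses only a constant factor in density. Balog--Szemerédi--Gowers is used to handle the case where only part of $A$ has small doubling. Whether the initial $(10k)^{-6}$ suffices for all $k$ steps, rather than a quantity exponentially small in $k$, is the part I am least sure of.
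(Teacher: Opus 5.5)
Your skeleton matches the paper's: remove one colour per step using Bogolyubov--Ruzsa with doubling constant $O(k)$, and pay a factor $O(\log^4 k)$ in the exponent per colour, giving $N^{\exp(-O(k\log\log 2k))}$ overall. However, there is a genuine gap: the single step is never actually carried out, and it fails at two specific points.

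\textbf{The doubling bound.} As stated it is false. The hypothesis only constrains sums of two elements of the same parity; mixed-parity sums have non-integral midpoints and are unconstrained. So $A+A$ is not contained in $2\cdot[N]$ plus a few elements. For instance, take the even part of $A$ to be a progression of difference $2$ and the odd part a progression with a huge common difference. Both midpoint sets fit in a tiny complement, yet the mixed sums alone have size about $|A|^2/4$.

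You must pass to $B=A\cap J$, where $J$ is a parity class containing half of $A$. Then $B+B$ avoids $2\cdot R$ and $|B+B|\le\rho M\le k|A|\le 2k|B|$. But now you have to work inside $J$, whose complement density can be nearly $2\rho$, and that would wreck any per-step bookkeeping. The paper handles this by first halving into parity classes up to $1000k$ times. Either the complement density drops by a factor $4$, or it reaches an interval both of whose parity classes have complement density at most $(1+1/100k)\rho$.

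\textbf{The bookkeeping, which is the real missing idea.} You need each colour to be removed \emph{entirely} at a cost of only a factor $1+O(1/k)$ in complement density.
\begin{enumerate}
\item A constant-factor loss per step, which is what you aim for, compounds to $c^k$ and would need an exponentially small starting density.
\item The $\exp(O(d))=\exp(O(\log^4 k))$ losses from covering by Bohr sets or from regularity are worse still.
\item A translate on which $A$ merely has lower density, as in your averaging step and your Chang/BSG fallback, never eliminates $A$. So it cannot support ``after $k$ steps the set is free of all colours''.
\end{enumerate}

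The paper instead counts bad translates with only polynomial losses. First dispose of the case $k|B|<\sqrt{M}$ by pigeonhole. Then let $P$ be the longest direction of the GAP in $2B-2B$; it has length $M^{1/O(\log^4 k)}$ and even common difference, so translates by $t\in J$ stay in $J$. Shorten it to $P'$ of length $|P|/10^7k$ to control edge effects. If $t+P'$ meets $B$, then $t\in B-P'\subseteq 3B-2B$, and Pl\"unnecke gives $|3B-2B|\le(2k)^5|B|\le(2k)^5\rho M$. Because $\rho\le(10k)^{-6}$, this is below $|J|/200k$.

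This is exactly why the hypothesis has exponent $6$: five powers of $k$ are consumed by Pl\"unnecke, and one more makes the bad proportion $O(1/k)$. The good translates avoid $A$ completely. Averaging complement density over all translates of $P'$ essentially returns $\rho_J$, so some good translate has complement density at most $\rho_J/(1-1/200k)$. No Bohr-set, regularity or Lemma~\ref{lem:find_progression} losses are incurred.

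Finally, $(1+1/25k)(10k)^{-6}\le(10(k-1))^{-6}$, so the hypothesis is restored for $k-1$ colours and induction on $k$ finishes the proof. This settles the point you flagged as uncertain: $(10k)^{-6}$ does suffice, but only through the polynomial Pl\"unnecke count, not through the covering route you proposed.
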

Before we give the proof let us state the Bogolyubov-Ruzsa Lemma with Generalised Arithmetic Progressions. A Generalised Arithmetic Progression $\mathcal{P}$ of dimension $D$ is defined by generators $d_1,d_2,\dots,d_D$ and lengths $L_1,L_2,\dots,L_D$ in the following way.
$$\mathcal{P}=\{i_1d_1+i_2d_2+\dots i_Dd_D : 0\leq i_j<L_j\}.$$
Now we can state the Bogolyubov-Ruzsa Lemma (in the version by Sanders~\cite{sanders-structure}).
\begin{theorem}[Bogolyubov-Ruzsa Lemma]
Let $A\subseteq [N]$ with $|A+A|\leq K|A|.$ There exists a Generalised Arithmetic Progression with dimension at most $O(\log^4 K)$ and size at least $\exp(-O(\log^4 K))|A|$, such that
$$\mathcal{P}\subseteq 2A-2A.$$
\end{theorem}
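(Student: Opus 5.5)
The plan is to follow Sanders' route. We transfer $A$ into a cyclic group where it is dense, find a large low-rank Bohr set inside $2A-2A$ there using almost periodicity together with Chang's Lemma, convert that Bohr set into a Generalised Arithmetic Progression by geometry of numbers, and pull the progression back to $\mathbb{Z}$.

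\textbf{Step 1 (modelling).} By Plünnecke's inequality, $|A+A|\leq K|A|$ gives $|8A-8A|\leq K^{16}|A|$. Ruzsa's modelling lemma then provides a subset $A_0\subseteq A$ with $|A_0|\geq |A|/8$ and a prime $p\leq K^{O(1)}|A|$ such that $A_0$ is Freiman $8$-isomorphic to a set $A'\subseteq\mathbb{Z}_p$. The set $A'$ has density $\alpha\geq K^{-O(1)}$ and doubling $O(K)$. An $8$-isomorphism maps $2A'-2A'$ bijectively and additively onto $2A_0-2A_0\subseteq 2A-2A$. Hence it suffices to find a proper GAP $\mathcal{P}'\subseteq 2A'-2A'$ of dimension $O(\log^4 K)$ and size $\exp(-O(\log^4K))p$, since $p\geq |A'|\geq |A|/8$. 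The image of $\mathcal{P}'$ is again a GAP with the same generators in $\mathbb{Z}$, because an $8$-isomorphism preserves the linear relations among $2$-fold sums and differences.

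\textbf{Step 2 (Bohr set in $2A'-2A'$).} The classical argument writes
$$1_{A'}*1_{A'}*1_{-A'}*1_{-A'}(x)=p^3\sum_\gamma|\widehat{1_{A'}}(\gamma)|^4\gamma(x)$$
and takes the Bohr set on the large spectrum. Applied directly, it only yields rank $K^{O(1)}$, or $O(K\log K)$ using Chang's Lemma. To reach polylogarithmic rank I would use the Croot--Sisask almost-periodicity lemma. It gives a set $X$ with $|X|\geq \exp(-O(\log^{2} K\cdot\log(1/\epsilon)))|A'|$ such that every $t\in X-X$ satisfies
$$\|1_{A'}*1_{-A'}(\cdot+t)-1_{A'}*1_{-A'}\|_{L^{m}}\leq \epsilon\,\|1_{A'}*1_{-A'}\|,$$
with $m\approx\log K$. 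Since $X$ has density $K^{-O(\log^2 K)}$, Chang's Lemma shows that the spectrum $\mathrm{Spec}_{1/2}(X)$ spans, over $\{-1,0,1\}$, a set of $d=O(\log^{4}K)$ characters (up to $\log\log$ factors). The corresponding Bohr set $B$ has rank $d$ and width $\rho\gg 1/d$, and each $t\in B$ is approximately a period of $1_{A'}*1_{-A'}$ on $L^{m}$ scale. Integrating this approximate periodicity against $1_{A'}*1_{-A'}$ shows that $1_{A'}*1_{A'}*1_{-A'}*1_{-A'}(t)>0$ for every $t\in B$, that is $B\subseteq 2A'-2A'$. The size bound $|B|\geq(\rho/4)^dp\geq\exp(-O(d\log d))p$ is standard. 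This step is the main obstacle: the exponents in the almost-periodicity lemma and in Chang's Lemma must be balanced carefully, choosing $\epsilon$ of order $K^{-O(1)}$ and $m\sim\log K$, to keep the rank at $O(\log^4K)$ rather than something polynomial in $K$.

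\textbf{Step 3 (Bohr set to GAP).} A Bohr set in $\mathbb{Z}_p$ of rank $d$ and width $\rho$ is the image of a lattice intersected with a box in $\mathbb{R}^d$. Minkowski's second theorem gives successive minima and a basis $d_1,\dots,d_d$ for which $\{\sum i_jd_j: |i_j|<L_j\}$ is a proper GAP inside $B$, of size at least $(\rho/d)^{d}p=\exp(-O(\log^4K\log\log K))p$. Absorbing the $\log\log$ factor into the constant, as in Sanders' statement, and pulling back through the isomorphism of Step 1 yields $\mathcal{P}\subseteq 2A-2A$ with dimension $O(\log^4K)$ and $|\mathcal{P}|\geq\exp(-O(\log^4K))|A|$.
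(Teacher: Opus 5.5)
The paper gives no proof of this statement; it quotes it from Sanders. Your proposal therefore has to stand on its own as a proof of Sanders' theorem.

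Steps~1 and~3 are standard and essentially fine: Ruzsa modelling into $\mathbb{Z}_p$, then Bohr set to GAP via the geometry of numbers. Two small corrections there. The pulled-back progression has generators $\Phi^{-1}(d_j)$, not ``the same generators''. And you need the small observation that a Freiman $2$-isomorphism defined on a box is affine.

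\textbf{Step~2 does not work.} It is the whole content of the theorem, and there are two problems.

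First, the almost-periodicity bound is misquoted, and your pairing forces polynomially small $\epsilon$.
\begin{itemize}
\item The Croot--Sisask lemma does not give $|X|\geq\exp(-O(\log^2K\log(1/\epsilon)))|A'|$. It gives $|X|\geq(2K)^{-O(m\epsilon^{-2})}|A'|$, so $1/\epsilon$ enters the exponent quadratically.
\item Its normalisation, for $h=1_{A'}*1_{-A'}$, is $\|\tau_th-h\|_{L^m}\leq\epsilon|A'|\,\|1_{A'}\|_{L^m}$.
\item Pairing against $h$ itself, Hölder bounds the error only by $\|h\|_{L^{m'}}\epsilon|A'|^{1+1/m}\leq\epsilon|A'|^3$. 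The main term $\langle h,h\rangle=E(A')$ is only guaranteed to be $\gg|A'|^3/K$.
\item So your route really does need $\epsilon\lesssim1/K$, as you anticipated. But with the true bound, $X$ then has density $\exp(-O(K^2\log^2K))$, and Chang's Lemma gives rank polynomial in $K$. That is no better than the classical argument.
\end{itemize}

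Second, the claim that every $t\in B=\mathrm{Bohr}(\Gamma,\rho)$ is an $L^m$-almost period of $h$ is unsupported. Almost periodicity is known only on $X-X$ and its iterated sums $k(X-X)$. Showing $B\subseteq k(X-X)$ would itself be a Bogolyubov-type statement for $X$.

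\textbf{How Sanders gets around this.} He uses a different pairing and passes to the Bohr set only on the Fourier side.
\begin{itemize}
\item Take $g=\mu_{-A}*1_{A+A}$, which is identically $1$ on $A$, and set $F(x)=\langle\mu_A,\tau_xg\rangle$. Then $F(0)=1$, and $F(x)>0$ forces $x\in 2A-2A$.
\item Croot--Sisask with $p\asymp\log K$, plus Hölder, gives $|F(x+t)-F(x)|\leq k\epsilon K^{1/p}$ for $t\in k(X-X)$. The Hölder loss is only $K^{1/p}=O(1)$, so constant-size errors suffice.
\item One can take $\epsilon\asymp1/k$ with $k\asymp\log K$. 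Then $\mu(X)\geq\exp(-O(\epsilon^{-2}p\log K))\mu(A)=\exp(-O(\log^4K))$. This is where the exponent $4$ comes from.
\item One compares $F$ with $\nu*F$, where $\nu$ is the $k$-fold convolution power of $\mu_X*\mu_{-X}$. Its Fourier transform is below $4^{-k}$ off $\mathrm{Spec}_{1/2}(X)$, which beats the $\ell^1$ Fourier mass of $F$ (at most $K$). Chang's Lemma is then applied to $X$.
\item Controlling the remaining spectral part still needs care with the width of the Bohr set.
\end{itemize}

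\textbf{The $\log\log K$ factor.} Your Step~3 produces $(\rho/d)^dp=\exp(-O(\log^4K\log\log K))p$. A $\log\log K$ factor cannot be absorbed into an $O(\cdot)$ constant. So as written you would not reach the size bound $\exp(-O(\log^4K))|A|$ stated here.
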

\begin{proof}(Of Lemma~\ref{lem:termination})
We will show how to find a progression of length $N^{1/O(\log^4k)}$ which avoids one of the colours and on which density of $R$ is at least $1-(k-1)^{-6}$. Then the lemma is proved by induction on $k$ as with $k=0$ the statement is vacuously true.

Let $I_0=[N]$ and iterate up to $1000k$ times the following:
\begin{itemize}
    \item Let $\rho_i = 1-|R\cap I_i|/|I_i|$ be the compliment (of $R$) density within $I_i$.
    \item {If $i=1000k$ terminate with $I=I_i$ (CASE 1). In that case $$\rho=\rho_i<(1-1/200k)^{1000k}\rho_0\leq \rho_0/4.$$}
    \item Split our interval $I_i=\{1,2,3,\dots, |I_i|\}$ into even numbers and odd numbers.
    \item If on both parts compliment density is at least $(1-1/200k)\rho_i$ terminate with $I=I_i$ (CASE 2).
    \item Otherwise continue iteration with $I_{i+1}$ being the parity class with complement density at most $(1-1/200k)\rho_i$.
\end{itemize}

Let $M=|I|$ and choose the largest colour within $I$, call it $A$. Again divide $I$ into two parity classes and choose one where $A$ has at least $\lfloor \rho M/2k\rfloor$ elements, call it $J$ and the other one $J'$. Let $B=A\cap J$ and $\rho_J=1-|J\cap R|/|J|.$

Crucially because $(B+B)\cap (2\cdot R)=\emptyset$ we have
$$|B+B|\leq M-|I\cap (2\cdot R)|\leq k|A|\leq 2k|B|.$$
Thus $B$ is a set of small doubling. If $k|B|<\sqrt{M}$, then $R$ trivially contains an arithmetic progression of length at least $\sqrt{M}$ by the pigeonhole principle, so let us assume the opposite. By the Bogolyubov-Ruzsa's Lemma we find a Generalised Arithmetic Progression $\mathcal{P}$ with dimension at most $O(\log^4 k)$ and size at least $\exp(-O(\log^4k))|B|$ such that
$$\mathcal{P}\subseteq 2B-2B.$$
Choose $P$ to be the longest dimension of $\mathcal{P}$ so that 
$$|P|\geq |\mathcal{P}|^{1/O(\log^4 k)}\geq M^{1/O(\log^4 k)}.$$
Define $P'$ to be the first $\lfloor|P|/10^7k\rfloor$ elements of $P$.

By containment and the Plünnecke's inequality we have
$$|B-P'|\leq |2B-3B|\leq (2k)^5|B|.$$
And so there are at most $(2k)^5 \rho M$ translates of $P'$ which contain any element of $B$ and thus $A$ because elements of $A\setminus B$ trivially have the other parity.

Thus the number of translates of $P'$ which are contained wholly in $I\setminus A$ is at least
$$|J|-2M/10^6k-(2k)^5\rho M\geq |J|(1-1/200k).$$
Averaging their complement densities we obtain one translate $t+P\subseteq I\setminus A$ such that its complement density is
$$1-|(t+P')\cap R|/|P'| \leq \frac{\rho_J}{1-1/200k}.$$

We now finish the proof regardless of whether we are in CASE~1 or CASE~2. In both cases we choose $t+P'$ which has size at least $(N/2^{1000k})^{1/O(\log^4 k)}=N^{1/O(\log^4 k)}$.

If we are in CASE~1, then $\rho_J\leq 2\rho\leq \rho_0/2$ and so
$$\frac{\rho_J}{1-1/200k}\leq \rho_0\leq (10k)^{-6}$$
as desired.

If we are in CASE~2, then $\rho_J\leq (1+1/100k)\rho$ and so
$$\frac{\rho_J}{1-1/200k}\leq (1+1/25k)\rho_0\leq (1+1/25k)(10k)^{-6}\leq 10^{-6}(k-1)^{-6}$$
as desired.
\end{proof}
\begin{theorem}\label{thm:progression_in_integers}
Let $k,N$ be positive integers with $k<N$ and let
$$A_1\cup A_2\cup\cdots\cup A_k\cup R=[N]$$ 
be a partitioning. Assume that if $x,y\in A_i$ for some $A_i$, then $(x+y)/2\notin R$ and that $|R|\geq N/10.$ Then $R$ contains an arithmetic progression of length at least $N^{\exp(-O(k\log(2k)))}$.    
\end{theorem}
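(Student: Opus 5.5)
We iterate Corollary~\ref{cor:progressions} until the density of $R$ exceeds $1-(10k)^{-6}$, and then finish with Lemma~\ref{lem:termination}.

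\textbf{Setup of one step.} Set $P_0=[N]$ and $\alpha_0=|R|/N\geq 1/10$. At step $i$ we have a progression $P_i$ of length $N_i$ on which $R$ has density $\alpha_i$. Identify $P_i$ with $[N_i]$ by an affine map. This identification preserves the midpoint condition: if $x,y$ lie in the same colour and $(x+y)/2$ is an element of $P_i$, it is not in $R$. Let $A$ be the largest colour restricted to $P_i$, so
$$|A|\geq \frac{(1-\alpha_i)N_i}{k}, \qquad \beta_i:=\frac{1-\alpha_i}{k}.$$
There are no solutions to $x+y=2r$ with $x,y\in A$ and $r\in R\cap P_i$. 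Provided $\beta_i N_i\geq N_i^{4/5}$, Corollary~\ref{cor:progressions} gives a progression $P_{i+1}\subseteq P_i$ with
$$N_{i+1}\geq N_i^{1/4}, \qquad \alpha_{i+1}\geq\Bigl(1+\frac{1-\alpha_i}{32k}\Bigr)\alpha_i.$$

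\textbf{Counting steps.} Exactly as in the proof of Theorem~\ref{thm:f3_subspace}, while $\alpha_i\leq 1/2$ the density grows by a factor $1+\Omega(1/k)$, so it passes $1/2$ within $O(k)$ steps. After that, $1-\alpha_i$ decreases by a factor $1-\Omega(1/k)$ per step, so it halves every $O(k)$ steps. Reaching $1-\alpha_z\leq(10k)^{-6}$ therefore takes
$$z=O\bigl(k\log(10k)\bigr)=O\bigl(k\log(2k)\bigr)$$
steps in total. The length after $z$ steps is
$$N_z\geq N^{4^{-z}}=N^{\exp(-O(k\log(2k)))}.$$

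\textbf{Checking the side condition.} Throughout the iteration $\beta_i\geq (10k)^{-6}/k$, since we stop as soon as $1-\alpha_i\leq(10k)^{-6}$. So the hypothesis $\beta_iN_i\geq N_i^{4/5}$ reduces to $N_i^{1/5}\geq 10^6k^7$.

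This is the step that needs care. We handle it by a case split.
\begin{itemize}
\item If at some step $N_i^{1/5}<10^6k^7$, then $N_i=k^{O(1)}$, and hence $N\leq k^{O(4^{i})}$. In that regime the claimed bound $N^{\exp(-O(k\log 2k))}$ is at most a constant, provided the implied constant is chosen large enough. A progression of length $1$, i.e.\ any single element of $R$, which is nonempty, then suffices.
\item The same reasoning covers the requirement $k<N_z$ needed in Lemma~\ref{lem:termination}.
\end{itemize}

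\textbf{Termination.} Once $\alpha_z\geq 1-(10k)^{-6}$, apply Lemma~\ref{lem:termination} to $P_z\cong[N_z]$, with the partition induced by $A_1,\dots,A_k,R$; the midpoint hypothesis is inherited as before. It yields a progression in $R$ of length
$$N_z^{\exp(-O(k\log\log 2k))}=N^{\exp(-O(k\log 2k))}.$$
A progression inside the progression $P_z$ is again an arithmetic progression in $[N]$, which completes the argument.
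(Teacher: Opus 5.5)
Your proposal is correct and follows the same route as the paper. You iterate Corollary~\ref{cor:progressions} with $\beta_i=(1-\alpha_i)/k$, note that $1-\alpha_i$ halves every $O(k)$ steps, so $O(k\log(2k))$ steps bring the complement density below $(10k)^{-6}$ at a cost of $N_{i+1}\geq N_i^{1/4}$ per step, and finish with Lemma~\ref{lem:termination}. The paper's proof leaves the side conditions $\beta_iN_i\geq N_i^{4/5}$ and $k<N_z$ unchecked, so your explicit verification (otherwise $N$ is so small that the bound is trivial) and your use of the increment $1+(1-\alpha_i)/(32k)$, which matches the corollary as stated, are welcome additions.
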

\begin{proof}
    We apply Corollary~\ref{cor:progressions} $s$ times (to be decided later in the proof). Let $N_0=N$ and denote the length of the progression in the first application as $N_1$, second $N_2$ etc. Let $\alpha_0=\alpha$ and denote the density of $R$ in the first application as $\alpha_1$, second $\alpha_2$ etc. Denote the subsequent progressions as $P_0=[N], P_1,\cdots, P_s$. Let $A^{i}$ be the largest intersection of any colour with $P_i$. Because there at $k$ colours we have $|A_i|\geq \frac{1-\alpha_i}{k}N_i$.
    
    From the conclusion of Corollary~\ref{cor:progressions} we know that for any $i=0,1,\cdots,s-1$ we have
    $$N_{i+1}\geq N_i^{1/4}$$
    and
    $$\alpha_{i+1}\geq \Bigl(1+\frac{1-\alpha_i}{2k}\Bigr)\alpha_i.$$
    Observe that
    $$\frac{1-\alpha_{i+1}}{1-\alpha_i} \leq 1-\frac{\alpha_i}{2k}\leq 1-\frac{1}{20k}$$
    and so we have $1-\alpha_{i+{20k}}\leq (1-\alpha_i)/2$. In other words, the proportion of the complement of $R$ in $P_i$ halves every $20k$ steps.

    In order to apply Lemma~\ref{lem:termination} the complement proportion must be at most $(10k)^{-6}$ as so $s=\lceil120k\log_2k\rceil=O(k\log(2k))$ suffices. Finally for the interval $N_s$ we apply Lemma~\ref{lem:termination} to obtain an arithmetic progression of length $N^{\exp(-O(k\log(2k)))}$ inside $R$.
    
\end{proof}
\section{Improved dependence on $k$}
The main result in the previous section shows the existence on an arithmetic progression of length at least $N^{\exp(-O(k\log(2k)))}$ in $R$. In this section we use additional observations to improve the negative exponent from $O(k\log(2k))$ to $O(k\log\log (2k))$.

This is accomplished by proving a more efficient version of the density increment Lemma~\ref{lem:bohr_set_increment} (and Corollary~\ref{cor:progressions}).
We split the argument into two cases - small additive energy of any large colour or big additive energy for all large colours.

Let us begin with a case of small additive energy. One of our tools is Chang's Lemma~\cite{chang} which we state here.
\begin{lemma}{(Chang)}\label{lem:chang}
Let $X\subseteq \mathbb{Z}_N$ and let 
$$\Lambda = \{\gamma:|\hat{1_X}(\gamma)|\geq \lambda|X|/N\}.$$ Then there is a set of characters $\Gamma$, such that $|\Gamma|\leq 2\lambda^{-2}\log(N/|X|)$ and $\Lambda\subseteq \mathrm{Span}(\Gamma)$.
\end{lemma}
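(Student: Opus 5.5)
We prove Chang's Lemma by the standard dissociativity argument. Rudin's inequality will control sums over a dissociated set of characters, and a maximality argument then places every large spectrum element in the span of that set. Throughout, write $\alpha=|X|/N$ and $f=1_X$. A set of characters $\Gamma=\{\gamma_1,\dots,\gamma_m\}$ is \emph{dissociated} if $\prod_j\gamma_j^{\epsilon_j}=1$ with $\epsilon_j\in\{-1,0,1\}$ forces all $\epsilon_j=0$.

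First we choose $\Gamma\subseteq\Lambda$ to be a maximal dissociated subset. Maximality gives the span property directly. For any $\gamma\in\Lambda\setminus\Gamma$, the set $\Gamma\cup\{\gamma\}$ fails to be dissociated. So there is a nontrivial relation, and it must involve $\gamma$ with exponent $\pm1$. Hence $\gamma\in\{\prod\gamma_j^{\epsilon_j}\}\subseteq\mathrm{Span}(\Gamma)$. It therefore remains only to bound $m=|\Gamma|$.

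To bound $m$, pick unit complex numbers $c_j$ with $c_j\hat f(\gamma_j)=|\hat f(\gamma_j)|$, and put $g=\sum_j \overline{c_j}\gamma_j$. Then
$$m\lambda\alpha\le\sum_j|\hat f(\gamma_j)|=\Bigl|\mathop{\mathbb{E}}_x f(x)\overline{g(x)}\Bigr|.$$
Apply Hölder's inequality with exponents $p'$ and $p$, where $1/p+1/p'=1$. This gives
$$m\lambda\alpha\le \|f\|_{p'}\|g\|_p=\alpha^{1-1/p}\|g\|_p,$$
with norms taken with respect to the uniform probability measure. Now invoke Rudin's inequality for dissociated sets:
$$\|g\|_p\le C\sqrt{p}\Bigl(\sum_j|c_j|^2\Bigr)^{1/2}=C\sqrt{pm}.$$
This yields $m\lambda\le C\sqrt{pm}\,\alpha^{-1/p}$, that is, $m\le C^2\lambda^{-2}p\,\alpha^{-2/p}$. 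Choosing $p=\log(1/\alpha)$ (or $p=2$ if $\alpha$ is close to $1$) makes $\alpha^{-2/p}=e^2$. The result is $m\ll\lambda^{-2}\log(N/|X|)$.

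The main obstacle is obtaining the specific constant $2$ in the statement. Rudin's inequality with an unspecified constant only gives $O(\lambda^{-2}\log(1/\alpha))$. To get the sharp constant I would follow the entropy-style or exponential-moment proof instead of going through $L^p$ norms. First bound $\mathop{\mathbb{E}}\exp(t\,\mathrm{Re}\,g)$ by $\prod_j\cosh(t)\le e^{mt^2/2}$; dissociativity is exactly what makes the expansion of the product of $(1+\cosh$-type factors$)$ have mean $1$. Then use the convexity bound
$$\mathop{\mathbb{E}}_x f(x)\,t\,\mathrm{Re}\,g(x)\le\alpha\log\Bigl(\alpha^{-1}\mathop{\mathbb{E}}e^{t\,\mathrm{Re}\,g}\Bigr).$$
Combining the two gives $t\lambda m\le \log(1/\alpha)+mt^2/2$. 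Optimizing at $t=\lambda$ gives $m\le 2\lambda^{-2}\log(1/\alpha)$, which is the stated bound. Only the order of magnitude of the bound matters for the later application in this section, so either route suffices.
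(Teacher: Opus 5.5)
The paper gives no proof of this lemma. It is quoted as a black box from Chang, so the only comparison is with the standard literature proof, and that is what you reproduce: a maximal dissociated subset of $\Lambda$, followed by an exponential-moment bound and Jensen's inequality. Your argument is correct. The exponential-moment version gives exactly the stated bound $|\Gamma|\le 2\lambda^{-2}\log(N/|X|)$ for every density $\alpha\in(0,1]$.

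There are two places to tighten.

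\emph{The $L^p$ route does not prove the lemma as stated.} With $p=2$, Hölder's inequality plus Parseval ($\|g\|_2=\sqrt m$; Rudin is not needed here) gives only $m\le\lambda^{-2}\alpha^{-1}$. This is not $\ll\lambda^{-2}\log(1/\alpha)$ as $\alpha\to 1$, because $\log(1/\alpha)\to 0$. So that route yields only $O(\lambda^{-2}\log(2/\alpha))$. This is harmless for the paper: Chang is applied to $C=\mathbb{Z}_N\setminus R$ with $|C|\le 9N/10$, and the bound is used in the form $\log(2/(1-\alpha))$. Still, your claim that $p=2$ handles $\alpha$ near $1$ is not right.

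\emph{The moment step should be stated precisely.} For $t\ge 0$ and $y\in[-1,1]$, convexity gives $e^{ty}\le\cosh t+y\sinh t$. Apply this with $y=\mathrm{Re}(\overline{c_j}\gamma_j(x))$. The majorants are positive, so you may multiply them, and $\mathbb{E}\,e^{t\,\mathrm{Re}\,g}\le\mathbb{E}\prod_j(\cosh t+\sinh t\,\mathrm{Re}(\overline{c_j}\gamma_j))$. Expanding, every non-constant term is a multiple of $\prod_{j\in S}\gamma_j^{\epsilon_j}$ with $S\neq\emptyset$ and $\epsilon_j=\pm 1$. By dissociativity this is a nontrivial character, so it averages to $0$. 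The mean is therefore $\cosh^m t\le e^{mt^2/2}$, not $1$; mean $1$ is the property of the Riesz product $\prod_j(1+\mathrm{Re}(\overline{c_j}\gamma_j))$.

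You also use implicitly that $\mathbb{E}\,f\,\mathrm{Re}\,g=\mathrm{Re}\,\mathbb{E}\,f\overline{g}=\sum_j|\hat f(\gamma_j)|$, which holds because $f$ is real. With these in place, you get $t\lambda m\le\log(1/\alpha)+mt^2/2$, and taking $t=\lambda$ finishes the proof.
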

\begin{lemma}\label{lem:small_energy}
Let $N\geq 1$ be an integer and let $A,R\subseteq \mathbb{Z}_N$. Let $|R|=\alpha N$ and $|A|=\beta N$ with $\alpha\geq 1/10$  and $\alpha+\beta\leq 1$. Assume that $E(A)\leq  |A|^3/M$ and that if $x,y\in A$ then $x+y\notin R$.
Then there exists an arithmetic progression $P$ of length at least $$N^{\Omega((\beta/(1-\alpha))^2\log(2/(1-\alpha))^{-1})}$$ such that we have
$$|P\cap R|/|P|\geq \Bigl(1+\Omega(\beta M)\Bigr)\frac{|R|}{N}.$$
\end{lemma}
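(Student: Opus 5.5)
We use the same Fourier-analytic density increment as in Lemma~\ref{lem:fields_increment}, with two changes: the count of sums lying in $R$ is bounded through the additive energy of $A$ instead of Parseval alone, and Chang's Lemma~\ref{lem:chang} controls the number of large spectral characters, so the final Bohr set has rank $O((\beta/(1-\alpha))^{-2}\log(2/(1-\alpha)))$. This rank determines the exponent of $N$ in the length of $P$.

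\textbf{Step 1: Fourier identity.} Let $f=1_R-\alpha$, so that $\hat f(1)=0$. The hypothesis gives $\sum_x 1_R(x)\,1_A*1_A(x)=0$, hence
$$\sum_x f(x)\,1_A*1_A(x)=-\alpha|A|^2 .$$
By Plancherel the left side equals $N^2\sum_{\gamma\neq 1}\hat f(\gamma)\overline{\hat{1_A}(\gamma)}^2$ up to conjugation conventions. Therefore
$$\sum_{\gamma\ne1}|\hat f(\gamma)|\,|\hat{1_A}(\gamma)|^2\ge \alpha|A|^2/N^2 .$$

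\textbf{Step 2: split the spectrum.} Set
$$\Lambda=\{\gamma:|\hat{1_A}(\gamma)|\ge\lambda|A|/N\}$$
with $\lambda\approx c\,\beta/(1-\alpha)$. For $\gamma\notin\Lambda$ we use $|\hat f|\le 1-\alpha$ pointwise. (Indeed $\hat f=\widehat{1_{R^c}}$ off the trivial character, so $|\hat f|\le |R^c|/N=1-\alpha$.) Combined with $\lambda|A|/N$ and Parseval, the off-$\Lambda$ contribution is at most
$$(1-\alpha)\,\lambda\,\frac{|A|}{N}\cdot\frac{|A|}{N}\le \frac{\alpha}{2}\cdot\frac{|A|^2}{N^2}$$
for suitable $c$, using $\alpha\ge 1/10$. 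This is where the $(\beta/(1-\alpha))^2$ enters once Chang is applied. The remaining half lives on $\Lambda$.

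The small-energy hypothesis enters on $\Lambda$. By Cauchy--Schwarz,
$$\sum_\Lambda|\hat f||\hat{1_A}|^2\le\Big(\sum|\hat f|^2\Big)^{1/2}\Big(\sum|\hat{1_A}|^4\Big)^{1/2}.$$
Here $\sum|\hat{1_A}|^4=E(A)/N^3\le |A|^3/(MN^3)$. Weighted averaging then yields some $\gamma\in\Lambda$ with $|\hat f(\gamma)|$ large relative to the weight $|\hat{1_A}(\gamma)|^2$. More precisely, we aim for an $L^2$-type statement: $\sum_{\gamma\in\Lambda}|\hat f(\gamma)|^2\gg \alpha^2\beta M\cdot\alpha$ relative to the trivial scale. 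This is a large $L^2$ mass of $f$ on $\mathrm{Span}(\Gamma)$, with the energy deficit $M$ amplifying it.

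\textbf{Step 3: increment on a Bohr set.} By Chang's Lemma, $\Lambda\subseteq\mathrm{Span}(\Gamma)$ with
$$d=|\Gamma|\le 2\lambda^{-2}\log(1/\beta)\ll(\beta/(1-\alpha))^{-2}\log(2/(1-\alpha)),$$
using $\beta\le 1-\alpha$. Let $B$ be a regular Bohr set on $\Gamma$ of width $\asymp 1/d^2$. Every $\gamma\in\mathrm{Span}(\Gamma)$ is nearly constant on $B$, so the $L^2$ mass from Step 2 transfers to
$$\sum_t\big(f*\mu_B(t)\big)^2\gg \beta M\,\alpha^2 .$$
Since $f*\mu_B$ has mean zero, some translate satisfies $f*\mu_B(t)\gg\beta M\alpha$, which is the claimed increment $(1+\Omega(\beta M))\alpha$. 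Finally, Lemma~\ref{lem:find_progression} gives a progression of length $N^{1/2d}$ inside a dilate of $B$. Averaging over the translates of this progression covering $t+B$ preserves the density, and $N^{1/2d}$ is the claimed length.

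\textbf{Main obstacle.} The delicate point is Step 2, getting a gain proportional to $M$ rather than merely a constant. It is not yet clear how the energy bound interacts with the Cauchy--Schwarz: the naive form bounds the $\Lambda$-contribution by $\|f\|_2(E(A)/N^3)^{1/2}$. That gives $\|f*\mu_B\|_2^2\gtrsim \alpha^2|A|/(N/M)$, which seems to require $M$-dependent normalisation. I would first try an $L^2$ increment argument, in which the energy bound forces $f$ to be large on many frequencies, and, if needed, accept a statement where the increment is $\Omega(\beta M)$ only in the regime $\beta M\le 1$. Secondary checks are the dependence of Chang's bound on $\log(N/|A|)=\log(1/\beta)$ and the hypothesis $d^{100d}\le N$ needed in Lemma~\ref{lem:find_progression}.
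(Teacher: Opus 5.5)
Your skeleton matches the paper's: the Fourier identity, restriction to a spectral set, Cauchy--Schwarz against $\sum_\gamma|\widehat{1_A}(\gamma)|^4=E(A)/N^3$, Chang, a Bohr set, and Lemma~\ref{lem:find_progression}. However, two concrete steps fail.

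\textbf{The off-$\Lambda$ estimate in Step~2 does not follow.} The bounds $|\hat f|\le 1-\alpha$ and $|\widehat{1_A}|<\lambda|A|/N$ off $\Lambda$, together with Parseval, only give $\sum_{\gamma\notin\Lambda}|\hat f||\widehat{1_A}|^2\le(1-\alpha)|A|/N$. This is at least the main term $\alpha|A|^2/N^2$, and much larger when $\beta$ is small. So nothing is left for $\Lambda$, and the factor $\lambda|A|/N$ cannot be extracted this way.

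\textbf{Chang's Lemma applied to $A$ gives the wrong logarithm.} It costs $\log(N/|A|)=\log(1/\beta)$. Since $\beta\le 1-\alpha$, we have $\log(1/\beta)\ge\log(1/(1-\alpha))$, so your justification runs the wrong direction. With $\lambda\asymp\beta/(1-\alpha)$ your rank is $((1-\alpha)/\beta)^2\log(1/\beta)$. This exceeds the claimed $((1-\alpha)/\beta)^2\log(2/(1-\alpha))$ by an unbounded factor (e.g.\ $\alpha=1/2$, $\beta\to 0$), so the stated progression length is not reached.

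\textbf{The missing idea is to threshold $\hat f$ rather than $\widehat{1_A}$.} Take $\eta=\alpha\beta/2$ and $S=\{\gamma:|\hat f(\gamma)|\ge\eta\}$. Parseval alone then bounds the off-$S$ part by $\eta|A|/N=\alpha\beta^2/2$. Moreover $|\hat f(\gamma)|=|\widehat{1_{R^c}}(\gamma)|$ for $\gamma\ne 1$, which you noted up to sign but did not use. So $S$ is the large spectrum of $1_{R^c}$ at relative level $\eta/(1-\alpha)$. Chang applied to $R^c$, which has density $1-\alpha$, gives $|\Gamma|\ll((1-\alpha)/\beta)^2\log(2/(1-\alpha))$, the bound in the statement.

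Alternatively, you can rescue your own split. Bound the off-$\Lambda$ part by $\sup_{\gamma\notin\Lambda}|\widehat{1_A}(\gamma)|\cdot\|\hat f\|_2\|\widehat{1_A}\|_2\le\lambda\beta\sqrt{\alpha(1-\alpha)\beta}$. This permits $\lambda\asymp\sqrt{\beta/(1-\alpha)}$ and gives rank $\ll\frac{1-\alpha}{\beta}\log\frac{1}{\beta}$, which is within the claimed bound.

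\textbf{Your ``main obstacle'' is not one.} Restrict both Cauchy--Schwarz factors to the spectral set. This gives $(\alpha\beta^2/2)^2\le\bigl(\sum_S|\hat f|^2\bigr)\beta^3/M$, i.e.\ $\sum_S|\hat f|^2\ge\alpha^2\beta M/4$. That is exactly your $\alpha^2|A|M/N$ under $\mathbb{E}$-normalisation. The regime $\beta M\le 1$ is automatic, since $E(A)\ge|A|^4/N$.

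\textbf{Extracting the increment.} Use $f*\mu_B\ge-\alpha$, not mean zero alone, so that $\mathbb{E}(f*\mu_B)^2=\mathbb{E}(f*\mu_B)(f*\mu_B+\alpha)\le\alpha\max f*\mu_B$. With that in place, running the $L^2$ step directly on $\mathrm{Bohr}(\Gamma,\rho)$, as in your Step~3, is fine. It is in fact tidier than the paper's passage through a Bohr set on $S$.
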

\begin{proof}

Consider the balanced function 
$$f(x)=1_R(x)-\alpha$$
and define its fourier coefficients as 
$$\hat{f}(\gamma):=\mathop{\mathbb{E}}_x f(x)\overline{\gamma(x)},$$
where $\gamma$ is an element of the dual group of $\mathbb{Z}_{N}$. Then by Parseval's Theorem we have
$$\sum_{x}f(x)1_A*1_A(x) = N^2\sum_{\gamma}\hat{f}(\gamma)\overline{\hat{1_A}(\gamma)}^2.$$

By using the assumption we have
$$\alpha\beta^2 N^2=\alpha |A|^2 \leq \Bigl|\sum_x (1_{R}(x)-\alpha)1_A*1_A(x)\Bigr|\leq N^2\sum_{\gamma}|\hat{f}(\gamma)||\hat{1_A}(\gamma)|^2.$$
Let $\eta = \frac{\alpha\beta}{2}$ and define the large spectrum of $\hat{f}$ as
$$S := \{\gamma : |\hat{f}(\gamma)|\geq \eta\}.$$
Let us split the sum to the sum of elements in $S$ and other elements.
$$\alpha\beta^2\leq \sum_{\gamma\notin S}|\hat{f}(\gamma)||\hat{1_A}(\gamma)|^2 + \sum_{\gamma\in S}|\hat{f}(\gamma)||\hat{1_A}(\gamma)|^2\leq \eta\frac{|A|}{N} + \sum_{\gamma\in S}|\hat{f}(\gamma)||\hat{1_A}(\gamma)|^2$$
and so
$$\frac{\alpha\beta^2}{2} \leq \alpha\beta^2 - \frac{\eta\beta}{2}\leq \sum_{\gamma\in S}|\hat{f}(\gamma)||\hat{1_A}(\gamma)|^2.$$
Since $\sum_{x\in\mathbb{Z}_{N}}1_A*1_A(x)^2=E(A)\leq |A|^3/M$ we get by the Cauchy-Schwarz inequality
$$\frac{\alpha^2\beta^4}{4} \leq \Bigl(\sum_{\gamma\in S}|\hat{f}(\gamma)|^2\Bigr)\cdot\frac{8|A|^3}{MN^3}$$
and so
$$\frac{\alpha^2\beta M}{32} \leq \sum_{\gamma\in S}|\hat{f}(\gamma)|^2.$$

We now routinely obtain a density increment based on the large $L_2$ norm we just showed.

Define a regular Bohr set $B'$ of rank $|S|$ and width between $1/100$ and  $1/200$ generated by the characters from $S$. Then because $|\hat{\mu}_{B'}(\gamma)|=|\hat{1_{B'}}(\gamma)| /|B'|$ is at least $9/10N$ on the characters $\gamma\in S$ we have
$$\sum_x |f * \mu_{B'}(x)|^2 = N\sum_{\gamma} |\hat{f}(\gamma)|^2 |\hat{\mu}_B(\gamma)|^2 \geq \frac{N}{2}\sum_{\gamma \in S} |\hat{f}(\gamma)|^2\geq \frac{\alpha^2 \beta MN}{64}.$$
We now use this fact to extract a density increment on some $x$, that is large $f * \mu_{B'}(x)$. Notice that we are interested only in positive deviations from the mean so we cannot simply take average value, instead we use the fact that the mean of $f*\mu_{B'}(x)$ is 0 to write
$$\frac{\alpha^2 \beta M}{64}\leq \mathbb{E}_x (f * \mu_{B'}(x))^2 + \alpha\cdot \mathbb{E}_x \Bigl(f * \mu_{B'}(x)\Bigr) = \mathbb{E}_x (f * \mu_{B'}(x))(f * \mu_{B'}(x)+\alpha).$$
Now the second factor is always positive and so
$$\frac{\alpha^2\beta M}{64}\leq \max_x \Bigl(f * \mu_{B'}(x)\Bigr) \cdot \alpha.$$
Therefore there exists an $x$ for which
$$\frac{\alpha \beta M}{64} \leq \max_x \Bigl(1_R * \mu_{B'}(x)-\alpha\Bigr).$$
Therefore there exists a translate $x$ with 
$$\frac{|(x+B')\cap R|}{|B'|}\geq \alpha\Bigr(1+\frac{\beta M}{64}\Bigl).$$

We now show $B'$ actually contains a bohr set with a smaller set of generators. Let $C=\mathbb{Z}_n\setminus R$ and apply Chang's lemma to the set
$$\Lambda = \Bigl\{\gamma : |\hat{1_C}(\gamma)|\geq \frac{\eta}{1-\alpha}|C|/N\Bigr\}.$$
Observe that $S$ and $\Lambda$ are identical with the possible exception of the trivial character because $\hat{1_C}(\gamma)=-\hat{1_R}(\gamma)$ for non-trivial $\gamma$. Thus there exists a set of characters $\Gamma$ with
$$|\Gamma|\ll \Bigl(\frac{1-\alpha}{\eta}\Bigr)^2\log(N/|C|)\ll \Bigl(\frac{1-\alpha}{\eta}\Bigr)^2\log(2/(1-\alpha))$$
such that $S\subseteq\mathrm{spec}(\Gamma)$. As a consequence there exists a regular Bohr set $$B=\mathrm{Bohr}(\Gamma,\rho)\subseteq \mathrm{Bohr}(S,1/200) \subseteq B',$$
where $\rho = \Omega(|\Gamma|^{-2})$ such that $|B'_{1+\rho}|\leq 1.01|B'|$. By considering all translates of $B$ that intersect $B'$ we find one where the density increment is at least $1+\Omega(\beta M)$.
By Lemma~\ref{lem:find_progression} we extract an arithmetic progression of length at least $N^{\Omega(1/|\Gamma|)}$ from $B$ with the increment on $R$ at least $1+\Omega(\beta M)$. 
\end{proof}
In the next lemma we make use of a corollary of the Bogolyubov-Ruzsa lemma. This version can be also attributed to Sanders~\cite{sanders-structure}. It follows from the existence of a bohr set in $2A-2A$ and a well-known Ruzsa's covering lemma.
\begin{theorem}\label{thm:freiman}
    Suppose that for some fixed constant $K$ a set $A\subseteq \mathbb{Z}_N$ satisfies $|A+A|\leq K|A|$. Then there is $d=O(\log^4 K)$ and regular bohr set $B$ with $\mathrm{rank}(B)\leq d$ and $|B|\geq |A|/e^d$ such that $|A\cap B|\geq \exp(-C\log^4 K)|B|.$
\end{theorem}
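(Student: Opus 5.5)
The plan is to obtain Theorem~\ref{thm:freiman} by combining the Bogolyubov-Ruzsa Lemma, in its Bohr set form in $\mathbb{Z}_N$ due to Sanders, with Ruzsa's covering lemma. The Bohr set form says the following: if $|A+A|\leq K|A|$, then $2A-2A$ contains a Bohr set $B_0=\mathrm{Bohr}(\Gamma,\rho)$ with $|\Gamma|=d=O(\log^4 K)$, $\rho\gg 1/\log^{O(1)}K$, and $|B_0|\geq \exp(-O(\log^4 K))|A|$. The first step is to pass to a regular Bohr set. By the standard regularity lemma for Bohr sets we may pick $\rho'\in[\rho/2,\rho]$ such that $B=\mathrm{Bohr}(\Gamma,\rho')$ is regular. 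The usual volume estimate $|\mathrm{Bohr}(\Gamma,\rho/2)|\geq 4^{-d}|\mathrm{Bohr}(\Gamma,\rho)|$ then keeps $|B|\geq \exp(-O(\log^4 K))|A|$, and $B\subseteq B_0\subseteq 2A-2A$.

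Next I will show that $A$ is covered by few translates of $B$. Since $B\subseteq 2A-2A$, the Plünnecke inequality gives
$$|A+B|\leq |3A-2A|\leq K^5|A|\leq K^5\exp(O(\log^4K))|B|.$$
Ruzsa's covering lemma, applied to $A$ and the set $B_{1/2}=\mathrm{Bohr}(\Gamma,\rho'/2)$, which has comparable size by the same doubling estimate, then produces a set $X$ with
$$|X|\leq \frac{|A+B_{1/2}|}{|B_{1/2}|}\leq \exp(O(\log^4 K))$$
and $A\subseteq X+B_{1/2}-B_{1/2}\subseteq X+B$.

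Pigeonholing over the translates $x+B$, $x\in X$, gives a translate with $|A\cap(x+B)|\geq |A|/|X|\geq \exp(-O(\log^4K))|A|$. We also have $|A|\geq |A\cap(x+B)|$, and $|B|\leq |2A-2A|\leq K^4|A|$ by Plünnecke. Together these give $|A\cap(x+B)|\geq \exp(-C\log^4K)|B|$. The statement asks for $|A\cap B|$ with $B$ itself, so I will either read it up to translation, which is how the paper uses it, or replace $A$ by $A-x$; neither the doubling constant nor anything else in the statement changes.

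It remains to check $|B|\geq |A|/e^d$. The lower bound $|B|\geq\exp(-O(\log^4K))|A|$ is of exactly the form $|A|/e^{d}$ once $d$ is taken as the larger of the rank bound and the implicit constant times $\log^4K$. Enlarging $d$ only weakens the rank condition, so this costs nothing. The main obstacle is this bookkeeping of constants: the exponent of $K$ in Plünnecke, the $4^{-d}$ loss from halving the width, and the size of the covering set $X$ must all be absorbed into $\exp(O(\log^4K))$. This works because $d=O(\log^4K)$ dominates $\log K$. Beyond that I rely on citing Sanders' theorem in its Bohr set form rather than the generalised arithmetic progression version stated earlier.
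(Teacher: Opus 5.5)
Your proposal is correct and is exactly the route the paper indicates: a Bohr set of rank $O(\log^4K)$ inside $2A-2A$ from Sanders' Bogolyubov--Ruzsa lemma, then Pl\"unnecke to bound $|A+B|$, Ruzsa's covering lemma to cover $A$ by $\exp(O(\log^4K))$ translates of $B$, and pigeonholing, with $B$ taken up to translation as in the paper's application. One correction to the black box as you quote it: drop the width bound $\rho\gg\log^{-O(1)}K$. A rank-$d$ Bohr set of width $\rho$ in $\mathbb{Z}_N$ has size at least $(c\rho)^{d}N$, so together with $B_0\subseteq 2A-2A$ and $|2A-2A|\leq K^4|A|$ it would force $|A|\geq\exp(-O(\log^4K\log\log K))N$, which fails for, say, an interval of length $\sqrt{N}$; you never use this bound, so the rest of your argument is unaffected.
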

Another ingredient is well-known Balog-Szemerédi-Gowers Theorem~\cite{balog-szemeredi, gowers1998}. Standard formulation is for the set $A-A$ but we may pass to $A+A$ by a simple application of Plünnecke's inequality and changing the constant from $5$ to $10$.
\begin{theorem}{(Balog-Szemerédi-Gowers)}\label{thm:bsg}
    Let $A\subseteq \mathbb{Z}$ and $E(A)\geq |A|^3/K$. Then there exists a set $A'\subseteq A$ such that $|A'|\gg |A|/K$ and 
    $$|A'+A'|\ll K^{10}|A'|.$$
\end{theorem}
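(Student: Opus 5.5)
This is the Balog--Szemerédi--Gowers theorem in the form needed here. The plan is to derive it from the standard $A-A$ version, which I take as known: if $E(A)\geq |A|^3/K$, then there is $A'\subseteq A$ with $|A'|\gg |A|/K$ and $|A'-A'|\ll K^{5}|A'|$. The task is then only to convert control of the difference set into control of the sumset.

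I will use the Plünnecke--Ruzsa inequality in the form: if $|X-X|\leq L|X|$, then $|mX-nX|\leq L^{m+n}|X|$ for all $m,n\geq 0$. Applying it with $X=A'$, $L\ll K^5$ and $m=2$, $n=0$ gives
$$|A'+A'|\leq L^2|A'|\ll K^{10}|A'|,$$
which is exactly the claimed bound. The lower bound $|A'|\gg |A|/K$ passes over unchanged from the $A-A$ version. The inequality can also be reached via the Ruzsa triangle inequality $|X||Y-Z|\leq|X-Y||X-Z|$, but the triangle inequality does not bound $|X+X|$ directly from $|X-X|$, so Plünnecke is the cleaner route.

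The main obstacle is only a matter of bookkeeping. I must make sure that the Plünnecke step is applied to $A'$ itself and not to $A$, since only $A'$ has small difference set. The exponent $5$ coming out of the $A-A$ version is what doubles to $10$. If I did not want to assume the $A-A$ version as a black box, the real work would be Gowers' graph lemma: a dense graph has a large subset in which most pairs are joined by many paths of length $3$. Here I will simply cite it, as the paper does.
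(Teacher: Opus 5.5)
Your proposal is correct and follows the paper's route. Like the paper, you take the standard $A-A$ form of Balog--Szemerédi--Gowers with exponent $5$ as a black box, then apply Plünnecke--Ruzsa to the same subset $A'$ to get $|A'+A'|\le L^2|A'|\ll K^{10}|A'|$, with the bound $|A'|\gg|A|/K$ carried over unchanged.
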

\begin{lemma} \label{lem:large_energy}
Let $N,k\geq 1$ be integers and let $R\subseteq \mathbb{Z}_N$ be a set with $|R|=\alpha N$ where $\alpha\geq 1/10$.  Let $A_1,A_2,\cdots,A_k\subseteq \mathbb{Z}_N$ be disjoint sets with $|A_i|\geq \beta N$. Assume that $E(A_i)\geq  |A_i|^3/M$ and that if $x,y\in A_i$ for some $A_i$, then $x+y\notin R$.

Then there exists an arithmetic progression $P$ of length at least $N^{1/\Omega(k\log^4M)}$ such that we have
$$|P\cap R|/|P|\geq \Bigl(1+\frac{\beta k}{\exp(C\log^4 M)}\Bigr)\frac{|R|}{N}.$$
\end{lemma}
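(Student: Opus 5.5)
Each colour $A_i$ has large additive energy, so by Theorem~\ref{thm:bsg} it has a structured piece $A_i'$, and by Theorem~\ref{thm:freiman} that piece lives densely on a low-rank Bohr set $B_i$. Translates of $B_i$ meeting $A_i'$ are then partly forced out of $R$ by the hypothesis $x+y\notin R$. The density of $R$ must therefore be higher elsewhere. To keep every rank bounded by $O(k\log^4 M)$ rather than exponential in $k$, I will intersect the $k$ Bohr sets into one Bohr set $B$ and run a single averaging argument over translates of $B$.

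\emph{Structure.} For each $i$, apply Theorem~\ref{thm:bsg} with $K=M$ to obtain $A_i'\subseteq A_i$ with $|A_i'|\gg |A_i|/M$ and $|A_i'+A_i'|\ll M^{10}|A_i'|$. Then apply Theorem~\ref{thm:freiman} to $A_i'$ to get a regular Bohr set $B_i$ of rank $d_i=O(\log^4 M)$ with $|A_i'\cap(x_i+B_i)|\geq \exp(-C\log^4M)|B_i|$ for some translate $x_i$. (Theorem~\ref{thm:freiman} as stated is not translation invariant, so I allow a translate.)

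Next let $B$ be a regular Bohr set generated by all $\bigcup_i\Gamma_i$, of rank $d\le\sum d_i=O(k\log^4M)$, with width a small constant multiple of the minimal width. By the regularity of $B_i$, translates $x_i+y+B$ with $y\in B_i$ cover $x_i+B_i$ almost uniformly. Averaging then gives, for each $i$, a translate $y_i+B$ with $|A_i\cap(y_i+B)|\ge \exp(-C\log^4M)|B|$.

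\emph{Density decrement and increment.} Write $S_i=A_i\cap(y_i+B)$. Every sum in $S_i+S_i$ lies outside $R$, and $S_i+S_i\subseteq 2y_i+B+B$. A set of density $\delta$ in a Bohr set has sumset filling a positive proportion of a slightly larger dilate; more simply, $|S_i+S_i|\ge |S_i|$. I would use the crude bound
\[
|(2y_i+B_2)\setminus R|\ge |S_i|\ge \exp(-C\log^4 M)|B|,
\]
which gives a deficit of $R$ of order $\exp(-C\log^4M)|B_2|/|B|\cdot|B|$ on $2y_i+B_2$. To make this a genuine increment I need the $k$ deficit regions to be essentially disjoint, or at least to count with multiplicity. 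This is where the factor $k$ in the statement comes from. The target is
\[
\sum_i |(z_i+B_2)\setminus R| \ge k\,\beta\exp(-C\log^4 M)|B_2| ,
\]
with the sets $z_i+B_2$ overlapping little. Here I would use the disjointness of the $A_i$ and the fact that $|B|\ge|A_i'|/e^{d}$ forces $B$ to be small compared to $N$ when $M$ is bounded. If the translates overlap heavily, the $A_i$ share a common region of total density $\gg k\beta$, and $R$ is absent from its sumset, which again yields the deficit. The mass of $\mathbb{Z}_N\setminus R$ on $\bigcup_i(z_i+B_2)$ then exceeds $(1-\alpha)$ times its measure by a relative amount $\Omega(\beta k\exp(-C\log^4M))$. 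Averaging $1_R*\mu_{B_2}$ over the complementary translates, using $\mathbb{E}_x 1_R*\mu_{B_2}(x)=\alpha$, gives a translate of $B_2$ (equivalently of $B$) where the density of $R$ is at least $(1+\beta k/\exp(C\log^4M))\alpha$.

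\emph{Progression.} Finally apply Lemma~\ref{lem:find_progression} to this Bohr set of rank $d=O(k\log^4M)$, whose width is only polynomially small in $d$. This gives a progression of length $N^{1/(2d)}=N^{1/\Omega(k\log^4M)}$, and simple averaging over its translates preserves the increment.

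The main obstacle is the increment step. I must turn $k$ separate local deficits into one increment of size proportional to $k\beta$ without the overlaps cancelling, and the lower bound on $|S_i+S_i|$ relative to $|B|$ must be made quantitatively strong enough. I would first try to handle overlaps by passing to a single common dilate of $B$ and summing the deficits there.
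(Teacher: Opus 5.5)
There is a genuine gap in your increment step, and it goes deeper than the overlap bookkeeping you flag. The only consequence of $x+y\notin R$ you use is $|(2y_i+B_2)\setminus R|\ge |S_i|$, i.e.\ the complement of $R$ has density at least $|S_i|/|B_2|$ on $2y_i+B_2$. That is not a deficit of $R$ relative to $\alpha$. The complement of $R$ already has average density $1-\alpha$ (in the application this is $4k\beta$), which can be much larger than $e^{-C\log^4M}|B|/|B_2|$. Worse, $B$ has rank $d=O(k\log^4M)$, so $|B_2|/|B|$ can be $2^{\Theta(d)}$, and your local bound is then exponentially small in $k$. Your claim that a dense subset of a Bohr set has sumset filling a positive proportion of a dilate is false: a sub-Bohr set of relative density $\delta$ has sumset of relative density about $\delta$.

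There is a second problem. Even granting a deficit on each $z_i+B_2$, you only have $k$ translates of a set of size $|B_2|$, which for a rank-$O(k\log^4M)$ Bohr set is typically $e^{-O(k\log^4M)}N$. Averaging over the complementary translates then gives a relative increment of about $k|B_2|/N$ times the deficit, not $\beta k e^{-C\log^4M}$. To obtain the factor $\beta k$, the set of deficient translates must have measure $\gtrsim \beta k N e^{-O(\log^4M)}$. Note also that the bound $|B|\ge |A_i'|/e^{d}$ holds for $B_i$, of rank $O(\log^4M)$, not for the joint Bohr set.

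The paper avoids both problems as follows.
\begin{itemize}
\item It uses the joint Bohr set $B'$ only to host one progression $P\subseteq B'_\epsilon$ of length $N^{1/\Omega(k\log^4M)}$. It then works with translates $t+P$, which are one-dimensional, so there is no $2^{d}$ loss.
\item It lets $T_i$ be the set of all $t$ with $|A_i\cap(t+P)|\ge |P|/8\exp(\log^4M)$. Averaging $|A_i\cap(t+P)|$ over $t\in B_i$ gives $|T_i|\gg |B_i|\exp(-\log^4M)$, which is comparable to $|A_i|$ up to $\exp(O(\log^4M))$.
\item Disjointness of the colours means each $t$ lies in at most $8\exp(\log^4M)$ of the $T_i$. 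Hence $|T|\ge \beta k N\exp(-4\log^4M)$, and this is where the factor $k$ enters.
\end{itemize}
Then comes the dichotomy your proposal lacks. If some $t\in T$ has $|R\cap(t+P)|\ge \alpha\bigl(1-\frac{1}{1000\exp(\log^4M)}\bigr)|P|$, the paper applies the Fourier/Roth-type increment of Corollary~\ref{cor:progressions} inside $t+P$, with colour density $\beta'=1/8\exp(\log^4M)$. This gives an increment $1+\Omega(\exp(-\log^4M))$, which suffices since $\beta k\le 1$. Otherwise every $t\in T$ is deficient by a definite factor, and averaging over $t\notin T$ gives the increment $1+\beta k/(1000\exp(5\log^4M))$. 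Without this Fourier branch you cannot handle the case where $R$ is not depleted near the dense pieces of the colours, and that is exactly the case your sumset bound cannot exclude.
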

\begin{proof}
Apply Theorem~\ref{thm:bsg} to the sets $A_1,A_2,\cdots,A_k$ to find subsets $A_1',A_2',\cdots, A_k'$. To these subsets apply Theorem~\ref{thm:freiman} in order to obtain regular bohr sets $B_1,B_2,\cdots, B_k$ with the following properties.
$$\mathrm{rank}(B_i)\ll \log^4 M$$
$$\exp(\log^4 M)|A_i\cap B_i|\geq |A_i|.$$
Let $B'$ be a regular bohr set generated by the union of generators of $B_1,B_2,\cdots B_k$ and with width $\rho \approx \frac{1}{(k\log^4
M)^2}$  such that for every $B_i$ we have $|B_i^{1+\rho}|\leq 1.01|B_i|$.

Let us now use Lemma~\ref{lem:find_progression} to find an arithmetic progression $P$ of length at least $N^{1/\Omega(k\log^4M)}$ contained in $B'_{\epsilon}$ with $\epsilon\approx \rho^2$. Notice that $B'_{1-\epsilon} + P\subseteq B'$.

Define $T_i$ to be the set of all translates of $P$ such that the colour $A_i$ for $i\leq k$ intersects it in at least $|P|/8\exp(\log^4M)$ elements. To lower bound $|T_i|$ we write
$$|T_i||P| + (|B_i|-|T_i|)\frac{|P|}{8\exp(\log^4M)}\geq \frac{1}{2}\sum_{t\in B_i} |A_i\cap(t+P)|\geq \frac{1}{4}|A_i'||P|\geq \frac{|B_i||P|}{4\exp(\log^4M)}$$
and so we have $|T_i|\gg \frac{|B_i|}{\exp(\log^4M)}$. Let $T=\bigcup_i T_i$, then any $t\in T$ can belong to at most $\exp(\log^4 M)$ sets $T_i$ because total density of such colours can be at most $1$ within $t+P$. Therefore we have
$$|T|\geq \frac{\sum_{i=1}^k|T_i|}{\exp(\log^4M)}\geq\frac{\sum_{i=1}^k|B_i|}{\exp(\log^4M)^2}\geq\frac{\sum_{i=1}^k|A_i|}{\exp(\log^4M)^3}\geq \frac{\beta kN}{\exp(4\log^4M)}.$$

We now consider two options. Let us first suppose that for some $t\in T$ we have 
$$|R\cap (t+P)|\geq \alpha \Bigl(1-\frac{1/1000}{\exp(\log^4 M)}\Bigr)|P|.$$

Then we use Corollary~\ref{cor:progressions} with $\beta'=\frac{1}{8\exp(\log^4M)}$ to show a large, $1+\beta'/2$, density increment of $R$ on $t+P'$, where $|P'|\geq |P|^{1/4}$.

Otherwise we calculate average size of $|R\cap(t+P)|/|P|$ where $t\notin T$, call it $r$. Notice that
$$(N-|T|)r + |T|\Bigl(1-\frac{1/1000}{\exp(\log^4 M)}\Bigr)\alpha \geq \alpha N$$
and so
$$(N-|T|)r \geq \alpha(N-|T|) + \alpha\frac{|T|/1000}{\exp(\log^4 M)}.$$
Upon dividing both sides by $N-|T|$ we have a density increment on some $x\notin T$ of
$$1+\frac{|T|/1000}{(N-|T|)\exp(\log^4 M)}\geq 1 + \frac{\beta k}{1000\exp(5\log^4 M)}.$$
\end{proof}
By combining both approaches we get a stronger version of the density increment lemma. Notice that we balance out Lemma~\ref{lem:small_energy} and Lemma~\ref{lem:large_energy} by choosing $M$ which gives us the same density increment in both cases. We leave out sizes of progressions out of our optimisation as that would complicate the calculations with negligible gain.
\begin{lemma}\label{lem:general_increment}

Let $N,k\geq 1$ be integers and let $R\subseteq [N]$ be a set with $|R|=\alpha N$, where $\alpha\geq 1/10$.  Let $A_1,A_2,\cdots,A_k\subseteq [N]$ be disjoint subsets of $[N]$ with size at least $(1-\alpha)N/4k$. Assume that if $x,y\in A_i$ for some $A_i$, then $(x+y)/2\notin R$.

Then there exists an arithmetic progression $P$ of length at least $N^{ck^{-2}\log(2/(1-\alpha)))^{-1}}$ such that we have
$$|P\cap R|/|P|\geq \Bigl(1+\frac{1-\alpha}{k}\exp(c\log^{1/4} k)\Bigr)\frac{|R|}{N}.$$
\end{lemma}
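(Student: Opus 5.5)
We plan to derive Lemma~\ref{lem:general_increment} by first moving to a cyclic group and then splitting according to the additive energy of the colours. We use a threshold $M$, to be fixed at the end.

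\emph{Reduction to a cyclic group.} Embed $[N]$ into $\mathbb{Z}_{N'}$ with $N'=2N+1$ odd. In this group division by $2$ is a bijection, so the condition ``$(x+y)/2\notin R$'' becomes $x+y\notin 2\cdot R$. There is no wrap-around, exactly as in Corollary~\ref{cor:progressions}. We replace $R$ by $R^*=2\cdot R$, which has density $\alpha N/N'\approx\alpha/2$. To keep the hypotheses $\alpha\geq 1/10$ and $\alpha+\beta\leq 1$ of Lemmas~\ref{lem:small_energy} and~\ref{lem:large_energy}, we instead work inside the copy of $[N]$. Concretely, we apply both lemmas with the balanced function $1_{R^*}-\alpha 1_{2\cdot[N]}$, as in Lemma~\ref{lem:bohr_set_increment}. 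The $2^{d}$ parity loss there does not occur, since the ground set is a single progression and $A+A\subseteq 2\cdot[N]$ already holds. A density increment for $2\cdot R$ on a progression $P$ pulls back to the same increment for $R$ on the progression $P/2$, as at the end of the proof of Lemma~\ref{lem:fields_increment}. We therefore treat the lemma as if $R, A_i\subseteq\mathbb{Z}_N$ with the hypotheses of the two energy lemmas. Throughout, $\beta=(1-\alpha)/4k$.

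\emph{Case split.} Fix $M=\exp(c_1\log^{1/4}k)$ for a small constant $c_1$. In the first case, some colour $A_i$ has $E(A_i)\leq|A_i|^3/M$. We apply Lemma~\ref{lem:small_energy} to $A=A_i$ (we may shrink $A_i$ to size exactly $\beta N$ if that is convenient). This gives a progression with increment $1+\Omega(\beta M)=1+\Omega\bigl(\frac{1-\alpha}{k}\exp(c_1\log^{1/4}k)\bigr)$. Its length is $N^{\Omega((\beta/(1-\alpha))^2/\log(2/(1-\alpha)))}=N^{\Omega(k^{-2}\log(2/(1-\alpha))^{-1})}$, which matches the stated length. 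In the second case every colour satisfies $E(A_i)\geq|A_i|^3/M$, so Lemma~\ref{lem:large_energy} applies to all $k$ colours simultaneously. The increment is $1+\beta k\exp(-C\log^4M)=1+\frac{1-\alpha}{4}\exp(-Cc_1^4\log k)$. If $c_1$ is small enough that $Cc_1^4<1/2$, this is at least $1+\frac{1-\alpha}{k}k^{1/2}/4$, which dominates $\frac{1-\alpha}{k}\exp(c\log^{1/4}k)$ for large $k$. For bounded $k$ we instead adjust the constant $c$. The length in this case is $N^{1/O(k\log^4 M)}=N^{1/O(kc_1^4\log k)}$, which is longer than $N^{c k^{-2}}$ (the logarithmic factor $\log(2/(1-\alpha))\geq\log 2$ only helps).

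\emph{Main obstacle.} The delicate part is the bookkeeping in the second case. Lemma~\ref{lem:large_energy} internally calls Corollary~\ref{cor:progressions}, whose alternative branch produces an increment $1+\beta'/2$ with $\beta'=1/8\exp(\log^4M)$. We must check that this is at least the required increment. Since $\beta'\geq k^{-1/2}$-ish while we need only about $(1-\alpha)k^{-1}\exp(c\log^{1/4}k)$, this holds once $c<c_1$. We must also check that the length in that branch, $|P|^{1/4}$, stays above $N^{ck^{-2}}$, and that the constant-size Bohr set rank hypotheses of Lemma~\ref{lem:find_progression}, namely $d^{100d}\leq N$, hold. If $N$ is too small for that, the claimed exponent already makes the conclusion trivial, since a single-element progression suffices. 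The choice of $M$ is exactly what balances the two cases: $\beta M$ against $\beta k e^{-C\log^4M}$. Taking $\log^4M\approx\log k$ gives the claimed $\exp(c\log^{1/4}k)$ gain.
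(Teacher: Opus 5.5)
Your overall structure matches the paper's. You pass to a cyclic group, split according to whether some colour has $E(A_i)\le |A_i|^3/M$, take $M=\exp(c_1\log^{1/4}k)$ so that $\exp(C\log^4M)$ is a small power of $k$, and compare the increments $\Omega(\beta M)$ and $\beta k\exp(-C\log^4 M)$.

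\textbf{The gap is in the reduction}, the one place where you depart from the paper. After dilating, $2\cdot R$ has density $\alpha$ only relative to $2\cdot[N]$, which is about half of $\mathbb{Z}_{2N+1}$. Lemmas~\ref{lem:small_energy} and~\ref{lem:large_energy} are stated and proved for $R$ of density $\alpha$ in the whole group: they use the balanced function $1_R-\alpha$, apply Chang's lemma to $\mathbb{Z}_N\setminus R$, and average over all translates of $P$. So they cannot be quoted, and ``applying them with the balanced function $1_{R^*}-\alpha 1_{2\cdot[N]}$'' is an unproved relative version of both.

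The justification you give is also false. In $\mathbb{Z}_{2N+1}$ the set $2\cdot[N]=\{2,4,\dots,2N\}$ consists of even residues only. But $A+A$ contains odd integers in $[3,2N-1]$ whenever $A$ meets both parity classes. So $A+A\not\subseteq 2\cdot[N]$, and the $d=1$ parity loss of Lemma~\ref{lem:bohr_set_increment} does occur. That loss costs only a factor $2$, but the remaining changes are real work:
\begin{itemize}
\item The function $1_{2\cdot R}-\alpha 1_{2\cdot[N]}=(1-\alpha)1_{2\cdot[N]}-1_{2\cdot([N]\setminus R)}$ is not the balanced function of a single set of density $1-\alpha$. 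Its large spectrum is therefore not controlled by Chang's lemma alone.
\item The increment coming from $\max_x f*\mu_{B'}(x)$ lives on $(x+B')\cap 2\cdot[N]$. Extracting a progression inside $2\cdot[N]$ requires adding the generator of the interval to every Bohr set and controlling boundary terms.
\item The averaging over $t\notin T$ in Lemma~\ref{lem:large_energy} has to be restricted to translates lying in $2\cdot[N]$.
\end{itemize}

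The paper avoids all this by halving the colours instead of doubling $R$. Split each $A_i$ by parity and replace it by its larger class, mapped to $\{a/2\}$ or $\{(a-1)/2\}$, which lies in $[0,N/2]$. Keep only the type (even or odd) carrying more total mass, and replace $R$ by $R-1$ in the odd case. Then:
\begin{itemize}
\item $x+y\notin R$ for $x,y$ in a new colour, with no wrap-around in $\mathbb{Z}_N$;
\item $R$ still has density $\alpha$ in all of $\mathbb{Z}_N$;
\item the colours lose only a constant factor of their total mass, which is what Lemma~\ref{lem:large_energy} actually uses.
\end{itemize}
So both energy lemmas apply verbatim, and the rest of your argument goes through. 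Separately, ``adjusting $c$'' cannot handle bounded $k$, since $\exp(c\log^{1/4}k)\geq 1$. There the increment holds only up to an absolute constant, a point the paper also leaves implicit.
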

\begin{proof}
We begin by moving the problem to $\mathbb{Z}_N$ without losing generality. Of course we have to do it in a way that does not introduce spurious solutions so let us proceed carefully. We start by splitting each $A_i$ into even and odd elements.
$$A_i=A_i^{odd}\cup A_i^{even}.$$
One of the sets $A_i^{odd}$, $A_i^{even}$ has at least $|A_i|/2$ elements. Then choose our new $A_i$ to be 
$$\{(a-1)/2 : a\in A_i^{odd}\},$$
or
$$\{a/2 : a\in A_i^{even}\},$$
whichever is larger. Let $I_1$ be the indices $i$ for which the first alternative was chosen and $I_2$ for which the second. Choose $I_1$ if $\sum_{i\in I_1}|A_i| > \sum_{i\in I_2}|A_i|$ and $I_2$ otherwise. Remove all remaining indices. If we have chosen $I_2$, let us keep $R$ as it is, if we have chosen $I_1$ let us subtract 1 from every element of $R$. This way we have sets $A_i$ for $i\in I_1$ or $i\in I_2$ such that all their elements are smaller than $N/2$ and so no wrap-around occurs. Thus for all such $A_i$ we have
$$\sum_{x\in \mathbb{Z}_N}1_R(x)1_{A_i}*1_{A_i}(x)=0.$$
We combine Lemma~\ref{lem:small_energy} and Lemma~\ref{lem:large_energy} with $M=\exp(C'\log^{1/4}k)$. 
Now if for any $A_i$ we have $$E(A_i)\leq |A_i|^3/M$$ we are in position to use Lemma~\ref{lem:small_energy}. If that is not the case we use Lemma~\ref{lem:large_energy}.
In one case our density increment is $1+\Omega((1-\alpha) M/k)$ and in the other case $1+\frac{1-\alpha}{\exp(C\log^{4} M)}$. Since for our choice of $M$ these two are optimised to be larger than $1+(1-\alpha)\exp(c\log^{1/4} k)/k$ we have finished the proof. 
\end{proof}
\begin{theorem}\label{thm:longer_progression_in_integers}
Let $k,N$ be positive integers with $k<N$ and let
$$A_1\cup A_2\cup\cdots\cup A_k\cup R=[N]$$ 
be a partitioning. Assume that if $x,y\in A_i$ for some $A_i$, then $(x+y)/2\notin R$ and that $|R|\geq N/10.$ Then $R$ contains an arithmetic progression of length at least $$N^{\exp(-O(k\log\log(2k))}$$    
\end{theorem}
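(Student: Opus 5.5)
We mirror the proof of Theorem~\ref{thm:progression_in_integers}, replacing Corollary~\ref{cor:progressions} by the stronger Lemma~\ref{lem:general_increment} as the increment step. We iterate on a sequence of progressions $P_0=[N],P_1,\dots,P_s$ with lengths $N_i=|P_i|$ and densities $\alpha_i=|R\cap P_i|/|P_i|$. At each step we identify $P_i$ with $[N_i]$ by an affine map; this preserves the midpoint condition, since $x+y=2r$ is affine invariant. We stop as soon as $1-\alpha_i\le (10k)^{-6}$, and then apply Lemma~\ref{lem:termination} to $P_i$.

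To apply Lemma~\ref{lem:general_increment} on $P_i$, we discard the colours with fewer than $(1-\alpha_i)N_i/4k$ elements of $P_i$. Together these have at most $(1-\alpha_i)N_i/4$ elements, so the remaining $k'\le k$ colours still cover at least $\tfrac34(1-\alpha_i)N_i$ points. The lemma needs all the listed colours to be large. We use it in the form where each $A_i$ is large, and its increment, computed from its proof, is proportional to the total mass of the large colours. This gives
$$\alpha_{i+1}\ge\Bigl(1+\frac{(1-\alpha_i)\,e^{c\log^{1/4}k}}{k}\Bigr)\alpha_i ,$$
and hence
$$1-\alpha_{i+1}\le \bigl(1-c'e^{c\log^{1/4}k}/k\bigr)(1-\alpha_i),$$
using $\alpha_i\ge 1/10$. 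So the complement density halves every $O(k\,e^{-c\log^{1/4}k})$ steps. The number of steps needed to reach $(10k)^{-6}$ is therefore
$$s=O\bigl(k\,e^{-c\log^{1/4}k}\log k\bigr)\le O(k).$$

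The main obstacle is the length bookkeeping. Each step takes $N_{i+1}\ge N_i^{\theta_i}$ with
$$\theta_i=c\,k^{-2}\log\bigl(2/(1-\alpha_i)\bigr)^{-1}.$$
A naive count gives a final exponent $\prod_i\theta_i=\exp(-O(s\log k))$, which only recovers $\exp(-O(k\log k))$. To do better we need either $s=O(k\log\log k/\log k)$ or better per-step exponents.

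\begin{itemize}
\item \emph{Small-energy case.} The length exponent is controlled by Chang's lemma, giving $|\Gamma|\ll ((1-\alpha)/\eta)^2\log(2/(1-\alpha))$. The increment exceeds what is needed, so we can reduce $M$ or raise the threshold $\eta$. This trades a constant factor in the increment for a per-step exponent of order $1/\mathrm{poly}\log k$ rather than $k^{-2}$.
\item \emph{Large-energy case.} The exponent is $1/O(k\log^4M)$ with $M=\exp(C'\log^{1/4}k)$, so the loss is $\log k$ per step.
\end{itemize}

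The count then works as follows. The increment factor $e^{c\log^{1/4}k}$ shortens the iteration to $s=O(k\log k\,e^{-c\log^{1/4}k})$ steps, each losing $\exp(-O(\log k))$ in the exponent. The product is $\exp(-O(k\log^2k\,e^{-c\log^{1/4}k}))$, which is at least $\exp(-O(k\log\log 2k))$ for large $k$; small $k$ is absorbed into the constant. I expect this balancing of the per-step exponent loss against the number of steps to be the delicate point. It may require re-tuning $M$ in Lemma~\ref{lem:general_increment}, for instance to $M=\log^{C}k$, to get the claimed $k\log\log(2k)$ exactly.

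Finally, we ensure the hypotheses of each lemma hold throughout. We need $N_i$ large enough for Lemma~\ref{lem:find_progression}, which requires $d^{100d}\le N$. If this fails at some stage, $N$ itself is at most $\exp(\mathrm{poly}(k))$, and the claimed length $N^{\exp(-O(k\log\log 2k))}$ is $O(1)$ and trivial. The final application of Lemma~\ref{lem:termination} on $P_s$ costs only a further exponent of $\exp(-O(k\log\log 2k))$. Combining all exponents gives the bound.
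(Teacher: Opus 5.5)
Your final argument is correct and is essentially the paper's: iterate Lemma~\ref{lem:general_increment} until $1-\alpha_i\le (10k)^{-6}$, note that the $e^{c\log^{1/4}k}$ gain makes $s\log k=o(k)$, so the iteration costs only $\exp(-o(k))$ in the exponent, and let Lemma~\ref{lem:termination} supply the dominant $\exp(-O(k\log\log 2k))$ loss. (Discarding the small colours and using a mass-proportional increment is an equivalent substitute for the paper's splitting of large colours into pieces of size $\beta_iN_i$.) Your intermediate claim that the naive count ``only recovers $\exp(-O(k\log k))$'' uses the crude bound $s\le O(k)$ and is contradicted by your own refined $s$, so re-tuning $M$ or $\eta$ is unnecessary---and it would not help anyway, since in Lemma~\ref{lem:small_energy} $\eta$ cannot exceed a constant multiple of $\alpha\beta$ and $M$ does not enter the length exponent.
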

\begin{proof}
The reasoning is similar to the proof of Theorem~\ref{thm:progression_in_integers} but we get stronger density increment on a shorter progression, thus we need to calculate how these two changes balance out. We iterate Lemma~\ref{lem:general_increment} $s$ times ($s$ to be decided later). Let $N_i$ be the length of arithmetic progression on which $R$ is dense after $i$ steps, so that $N_0=N$. Similarly as before let $P_i$ be the consecutive progressions and let $\alpha_i$ be the density of $R$ on $P_i$. Let $\beta_i=(1-\alpha_i)/4k$. Since this is $1/4$ of average colour density on $P_i$ all colours below this density can cover at most the fraction of $1/4$ of $P_i$. It is not necessarily the case that there are $k/4$ colours of density at least $\beta_i$ but we can assume so without loss of generality for the following reason. Choose the largest colour in $P_i$ and from any $\beta_i N_i$ elements of it create a brand new colour, which obviously satisfies the assumption. After at most $k$ steps we removed at most $\beta_i k = (1-\alpha)/4$ elements. Thus there must exist a colour with at least $\beta_i P_i$ elements and we can apply Lemma~\ref{lem:general_increment} with the same $k$. Similarly as before
$$\alpha_{i+1}\geq \Bigl(1+\frac{(1-\alpha_i)\exp(c\log^{1/4}k)}{k}\Bigr)\alpha_i$$
and so
$$\frac{1-\alpha_{i+1}}{1-\alpha_i} \leq 1-\frac{\alpha_i \exp(c\log^{1/4}k)}{k}\leq 1-\frac{\exp(c\log^{1/4}k)}{10k}.$$
Thus after at most $20k/\exp(c\log^{1/4}k)$ steps the size of the complement of $R$ halves, that is for any $i$ we have 
\begin{equation}
\frac{1-\alpha_{i+20k/\exp(c\log^{1/4}k)}}{1-\alpha_i} \leq \frac{1}{2}. \tag{*}
\end{equation}

We can apply Lemma~\ref{lem:termination} once $1-\alpha_i\leq (10k)^{-6}$ and so we can assume the opposite for all $i\leq s$. Moreover, by (*) we see that taking $s=O(k/\exp(c\log^{1/4}k))$ suffices (the $\log(2k)$ term is overpowered by the $\exp(c\log^{1/4}k)$ term). By the conclusion of Lemma~\ref{lem:general_increment} we have
$N_{i+1}\geq N_i^{ck^{-2}\log(2/(1-\alpha_i)))^{-1}}\geq N_i^{ck^{-2}\log (2k)}$.

Thus we apply Lemma~\ref{lem:termination} to the interval of length
$$N_s\geq N_i^{(ck^{-2}\log (2k))^s} \geq N^{\exp(-O(k/\exp(c\log^{1/4}k)))}$$
thus getting the desired length of a progression in $R$ owed to the worse progression length coming from Lemma~\ref{lem:termination}.
\end{proof}
\section{Use of AI in the proofs}
All arguments are human-generated, with the exception of Lemma~4 and the corresponding termination argument in the proof of Theorem~\ref{thm:f3_subspace}, which were suggested by ChatGPT. Previous argument used the pigeonhole principle once density of $R$ was sufficient, which resulted in more iterations. The whole document is written by the author.

\textsc{Faculty of Mathematics and Computer Science, Adam Mickiewicz University, Umultowska 87, 61-614 Poznan, Poland }

\textit{Email address:} tomasz.kosciuszko@amu.edu.pl
\end{document}